\documentclass[11pt]{amsart}

\usepackage[utf8]{inputenc}
\usepackage{amsmath}
\usepackage{amsfonts}
\usepackage{amsthm}
\usepackage{mathtools}
\usepackage{enumitem}
\usepackage{mathrsfs}
\usepackage{datetime}
\usepackage[hidelinks]{hyperref}
\usepackage{bookmark}

\newtheorem{theorem}{Theorem}[section]
\newtheorem{proposition}[theorem]{Proposition}
\newtheorem{lemma}[theorem]{Lemma}
\newtheorem{corollary}[theorem]{Corollary}
\theoremstyle{definition}
\newtheorem{definition}[theorem]{Definition}
\newtheorem{example}[theorem]{Example}
\newtheorem{remark}[theorem]{Remark}

\numberwithin{equation}{section}

\setlength{\oddsidemargin}{1.0mm}
\setlength{\evensidemargin}{1.0mm}
\textwidth=36cc \baselineskip=16pt

\DeclareMathOperator{\Span}{Span}
\newcommand{\rspan}{\Span_{\mathbb{R}}}
\newcommand{\Q}{\mathbb{Q}}
\newcommand{\R}{\mathbb{R}}
\newcommand{\C}{\mathbb{C}}
\newcommand{\Z}{\mathbb{Z}}
\newcommand{\T}{\mathbb{T}}
\newcommand{\N}{\mathbb{N}}
\newcommand{\M}{\mathscr{M}_\zeta}
\newcommand{\Mk}{\mathscr{M}_{\zeta^k}}
\newcommand{\Mn}{\mathscr{M}_{\zeta^n}}
\newcommand{\Mm}{\mathscr{M}_{\zeta_m}}
\newcommand{\Msm}{\mathscr{M}_{\sigma_m}}

\newcommand{\vect}[1]{\boldsymbol{\mathbf{#1}}}
\newcommand{\tM}{\mathscr{M}}

\newcommand\restr[2]{{
  \left.\kern-\nulldelimiterspace
  #1
  \right|_{#2}}}

\begin{document}

\title{Uniformly distributed orbits in $\mathbb{T}^d$ and singular substitution dynamical systems}
\author{Rotem Yaari}
\address{Rotem Yaari, Department of Mathematics,
Bar-Ilan University, Ramat-Gan, Israel}
\email{rotemyaari@gmail.com}

\begin{abstract}
We find sufficient conditions for the singularity of a substitution $\mathbb{Z}$-action's spectrum, which generalize a result of Bufetov and Solomyak, and we also obtain a similar statement for a collection of substitution $\mathbb{R}$-actions, including the self-similar one.
To achieve this, we first study the distribution of related toral endomorphism orbits.
In particular, given a toral endomorphism and a vector $\mathbf{v}\in\mathbb{Q}^d$, we find necessary and sufficient conditions for the orbit of $\omega\mathbf{v}$ to be uniformly distributed modulo $1$ for almost every $\omega\in\mathbb{R}$.
We use our results to find new examples of singular substitution $\mathbb{Z}$- and $\mathbb{R}$-actions.
\end{abstract}

\keywords{Substitution dynamical systems, Uniformly distributed sequences, Spectral theory of dynamical systems}

\subjclass{37B10, 37A30, 11K06}

\maketitle

\section{Introduction}
While the discrete spectrum of substitution dynamical systems has been heavily studied, e.g., \cite{dekking1978spectrum, host1986valeurs, ferenczi1996substitution, host1992representation, barge2002coincidence, hollander2003two}, less is known on the existence (and absence) of the absolutely continuous component. Primitive substitution $\Z$- and $\R$-actions always possess a nontrivial singular component \cite[Theorem 2]{dekking1978mixing}, \cite[Theorem 2.2]{clark2003size}, but nevertheless an absolutely continuous component may exist; examples are provided by the Rudin-Shapiro substitution and its generalizations \cite{queffelec2010substitution, frank2003substitution, chan2016substitution}.
In general, it is hard to determine whether the spectrum is purely singular. In the case of a constant length substitution, Bartlett developed further the work of Queff{\'e}lec \cite{queffelec2010substitution} and obtained an algorithm for computing the spectrum of a substitution, which he used to find examples of substitutions with purely singular spectrum \cite{bartlett2018spectral}. Berlinkov and Solomyak provided a sufficient condition for the singularity of the spectrum, in terms of the eigenvalues of the substitution matrix \cite{berlinkov2019singular}. In the non-constant length self-similar $\R$-action case, Baake et al.\ \cite{baake2016pair, baake_2019, baake2018spectral, baake2019renormalisation}
developed new techniques which they used to obtain sufficient conditions for the singularity of the closely related diffraction spectrum, and they explored some examples (see Remark \ref{remark Baake}(c)).

In \cite{bufetov2014modulus,bufetov2020spectral}, Bufetov and Solomyak introduced the spectral cocycle associated with a substitution (or more generally, an S-adic shift), and used it to obtain sufficient conditions for a substitution $\R$-action to have purely singular spectrum, see \cite[Corollaries 4.5 and 4.7]{bufetov2020spectral}.
However, it is difficult to find examples of singular substitution $\R$-actions based directly on these results, since the conditions are given in terms of the \emph{pointwise} upper Lyapunov exponent, which is rather hard to compute.
The situation is better in the $\Z$-action case studied in \cite{bufetov2020singular} by the same authors.
Using the uniform distribution of the orbit of the diagonal vector $\omega \vec{1}$ (where $\vec{1} = (1,\ldots,1)^t$), for Lebesgue-almost every (a.e.) $\omega\in\R$, under some related toral endomorphism,
they were able to replace the condition on the pointwise exponent with a condition on a
system-dependent exponent which is easier to estimate, thereby making the singularity conditions easier to verify.
To establish this uniform distribution, the authors applied a theorem of Host \cite{host2000some}.
Whereas Host’s theorem is relatively general, the authors' interest lies only in the distribution of the mentioned orbit, and its use required them to add the assumption that the characteristic polynomial of the substitution matrix is irreducible over $\Q$, which, as we will see, is not necessary in that specific case.

Motivated by the connection between uniform distribution and singularity of substitutions, we study in Section \ref{ud section} (after recalling some definitions and results on uniformly distributed sequences and linear recurrences) the distribution of certain toral endomorphism orbits.
In particular, we give conditions that are both necessary and sufficient for the orbit of $\omega\vec{1}$ to be uniformly distributed in the torus for a.e.\ $\omega\in\R$.
This is done by exploiting the connection between toral endomorphism orbits and linear recurrences, which allows us to use a powerful theorem of van der Poorten \cite{van1981some} and Evertse \cite{evertse1984sums}.
It turns out that in the case of a reducible characteristic polynomial, the singularity of a given substitution $\Z$- or $\R$-action depends only on a component of the spectral cocycle, which is obtained by a restriction to what we call the \emph{minimal subspace} of a vector, see Section \ref{minimal subspace section}.
Then, following some of the ideas of \cite{bufetov2020singular},
we obtain in Section \ref{substitutions section} sufficient conditions for a substitution $\Z$-action, and for a collection of $\R$-actions including the self-similar one, to have purely singular spectrum,
without assuming irreducibility or Bohr-almost periodicity, and without using the theorems of Host and Sobol (see Remark \ref{remark Baake}(c)).
In addition, these conditions depend on a non-pointwise Lyapunov exponent,
enabling us to explore new examples of reducible non-Pisot substitutions with singular spectrum in Section \ref{examples section}.

\section{Uniformly distributed sequences and linear recurrences}\label{ud section}
Recall that a sequence $(\mathbf{x}_n)_{n=0}^\infty\subset\mathbb{R}^d$ is said to be \emph{uniformly distributed modulo $1$} (abbreviated u.d.\ mod $1$) if for every choice of intervals $I_1,\dots,I_d \subseteq [0,1)$ we have \[ \lim_{N\to\infty}\frac{|\{0\le n < N:\mathbf{x}_n\bmod 1\in I_1\times\cdots\times I_d\}|}{N} = |I_1|\cdots|I_d|\]
(where $\mathbf{x}\bmod 1$ stands for the vector of entrywise fractional part of $\mathbf{x}$), or equivalently, if for every $\Z^d$-periodic continuous function $f:\R^d\to\C$,
\[ \lim_{N\to\infty}\frac{1}{N}\sum_{n=0}^{N-1}f(\vect{x}_n) = \int_{\T^d} f \,dm_d,\]
where $m_d$ is the $d$-dimensional (normalized) Haar measure. Note that we use the same notation for $f$ and for the induced function on the $d$-dimensional torus $\T^d$.

The following is a straightforward consequence of the well-known Weyl's criterion.
\begin{proposition}[{\cite[Chapter 1, Theorem 6.3]{kuipers2012uniform}}]\label{h thm}
A sequence $(\mathbf{x}_n)_{n=0}^\infty\subset\mathbb{R}^d$ is u.d.\ mod $1$ if and only if for every non-zero $\mathbf{h}\in\mathbb{Z}^d$ the sequence of real numbers $(\langle\mathbf{x}_n, \mathbf{h}\rangle)_{n=0}^\infty$ is u.d.\ mod $1$.
\end{proposition}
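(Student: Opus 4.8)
The plan is to reduce both sides of the asserted equivalence to one and the same exponential-sum condition, using Weyl's criterion in its $d$-dimensional and one-dimensional forms. I would take as given the $d$-dimensional criterion, which states that a sequence $(\mathbf{y}_n)_{n=0}^\infty\subset\R^d$ is u.d.\ mod $1$ if and only if
\[ \lim_{N\to\infty}\frac{1}{N}\sum_{n=0}^{N-1}e^{2\pi i\langle\mathbf{y}_n,\mathbf{g}\rangle}=0 \qquad\text{for every nonzero }\mathbf{g}\in\Z^d, \]
together with its $d=1$ specialization: a real sequence $(t_n)_{n=0}^\infty$ is u.d.\ mod $1$ if and only if $\frac1N\sum_{n=0}^{N-1}e^{2\pi i k t_n}\to 0$ for every nonzero $k\in\Z$. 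The single identity driving the argument is $k\langle\mathbf{x}_n,\mathbf{h}\rangle=\langle\mathbf{x}_n,k\mathbf{h}\rangle$.

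For the forward direction, I would assume $(\mathbf{x}_n)$ is u.d.\ mod $1$ and fix a nonzero $\mathbf{h}\in\Z^d$. For any nonzero $k\in\Z$ the vector $k\mathbf{h}$ lies in $\Z^d\setminus\{0\}$, so the $d$-dimensional criterion gives $\frac1N\sum_{n<N}e^{2\pi i\langle\mathbf{x}_n,k\mathbf{h}\rangle}\to 0$; rewriting the exponent as $2\pi i k\langle\mathbf{x}_n,\mathbf{h}\rangle$ and applying the one-dimensional criterion shows that $(\langle\mathbf{x}_n,\mathbf{h}\rangle)$ is u.d.\ mod $1$. For the converse, I would assume $(\langle\mathbf{x}_n,\mathbf{h}\rangle)$ is u.d.\ mod $1$ for every nonzero $\mathbf{h}$, fix an arbitrary nonzero $\mathbf{g}\in\Z^d$, and apply the one-dimensional criterion to $(\langle\mathbf{x}_n,\mathbf{g}\rangle)$ with $k=1$; this yields $\frac1N\sum_{n<N}e^{2\pi i\langle\mathbf{x}_n,\mathbf{g}\rangle}\to 0$, and since $\mathbf{g}$ was arbitrary, the $d$-dimensional criterion concludes that $(\mathbf{x}_n)$ is u.d.\ mod $1$.

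There is no genuine obstacle here; the proof is mechanical once Weyl's criterion is available, which is exactly why the proposition is labeled a straightforward consequence. The only point deserving a moment's attention is the matching of quantifiers: the one-dimensional criterion ranges over all integer multiples $k$ of a fixed $\mathbf{h}$, and one must observe that these multiples are precisely absorbed by the $d$-dimensional quantifier over $\mathbf{g}$, since $\{\,k\mathbf{h}:k\in\Z\setminus\{0\},\ \mathbf{h}\in\Z^d\setminus\{0\}\,\}=\Z^d\setminus\{0\}$.
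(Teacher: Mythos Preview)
Your proof is correct and is precisely the argument the paper has in mind: the paper does not give its own proof but simply records the proposition as ``a straightforward consequence of the well-known Weyl's criterion,'' citing Kuipers--Niederreiter, and your reduction via $k\langle\mathbf{x}_n,\mathbf{h}\rangle=\langle\mathbf{x}_n,k\mathbf{h}\rangle$ is exactly that straightforward consequence.
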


The next result, which is a consequence of a theorem of Koksma, will also be useful.
\begin{theorem}[{\cite[Chapter 1, Corollary 4.3]{kuipers2012uniform}}]\label{metric thm}
Let $(x_n)_{n=0}^\infty$ be a real sequence. Suppose that there exist $N\in\N$ and $\delta > 0$ such that $\lvert x_n - x_m\rvert \ge \delta$ for every $n,m>N$, $n\neq m$, then the sequence $(x_n\omega)_{n=0}^\infty$ is u.d.\ mod $1$ for a.e.\ $\omega\in\R$.
\end{theorem}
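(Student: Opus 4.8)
The plan is to verify the classical Weyl criterion underlying Proposition~\ref{h thm} in the one-dimensional setting of the real sequence $(x_n\omega)_n$: it suffices to prove that for each fixed nonzero $h\in\Z$ the exponential averages
\[ S_N(\omega) := \frac1N\sum_{n=0}^{N-1} e^{2\pi i h x_n\omega} \]
tend to $0$ as $N\to\infty$ for a.e.\ $\omega$. Indeed, the full-measure sets obtained for each $h$ can be intersected over the countable index set $\Z\setminus\{0\}$, and, working on bounded windows $[-T,T]$ that exhaust $\R$, a countable union of null sets remains null. To avoid a clash with the theorem's notation I write $N_0$ for the separation threshold (so $|x_n-x_m|\ge\delta$ whenever $n\neq m$ and $n,m>N_0$) and reserve $N$ for the length of the average.

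Fixing $h$ and $T>0$, I would estimate the second moment $\int_{-T}^T |S_N(\omega)|^2\,d\omega$ by expanding the square and integrating each summand $e^{2\pi i h(x_n-x_m)\omega}$ explicitly. The $N$ diagonal terms contribute $2T/N$ after dividing by $N^2$, whereas a term with $n\neq m$ integrates to at most $\frac{1}{\pi|h|\,|x_n-x_m|}$ in absolute value. The finitely indexed pairs for which separation is not assumed (those with $n\le N_0$ or $m\le N_0$) number at most $2N_0N$ and each contributes at most $2T$, hence altogether $O(T N_0/N)$ after normalization.

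The heart of the argument is the remaining off-diagonal sum $\sum_{n,m>N_0,\,n\neq m}\frac{1}{|x_n-x_m|}$, and this is exactly where the separation hypothesis is used. Since the points $\{x_m: N_0<m<N\}$ are $\delta$-separated, for a fixed $n$ the $j$-th nearest of the remaining points lies at distance at least $j\delta$ on each side of $x_n$; summing the two sides gives $\sum_{m\neq n}\frac{1}{|x_n-x_m|}\le \frac{2}{\delta}(1+\log N)$, and summing over the at most $N$ values of $n$ yields $O(\delta^{-1}N\log N)$. Combining the three contributions gives $\int_{-T}^T |S_N(\omega)|^2\,d\omega = O\!\left(\frac{\log N}{N}\right)$, with implied constant depending on $h$, $T$, $\delta$, $N_0$.

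This decay is not summable over all $N$, so I would restrict to the subsequence $N=k^2$, along which $\sum_k \int_{-T}^T |S_{k^2}(\omega)|^2\,d\omega<\infty$; by the monotone convergence theorem the integrand then has a finite sum a.e., forcing $S_{k^2}(\omega)\to 0$ for a.e.\ $\omega\in[-T,T]$. To upgrade this to convergence along the full sequence I would fill the gaps: for $k^2\le N<(k+1)^2$ the quantity $N S_N(\omega)-k^2 S_{k^2}(\omega)$ is a sum of at most $2k+1$ terms of modulus $1$, so $|S_N(\omega)|\le |S_{k^2}(\omega)| + (2k+1)/k^2\to 0$. The hard part is the clustering estimate of the previous paragraph, that is, turning the mere $\delta$-separation into the logarithmic bound while isolating the finitely many unseparated pairs; once that is in place, the Borel--Cantelli-type passage to $k^2$ and the gap-filling are routine.
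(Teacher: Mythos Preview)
The paper does not give its own proof of this statement; it is quoted verbatim as a known result with a reference to Kuipers--Niederreiter, \cite[Chapter~1, Corollary~4.3]{kuipers2012uniform}. Your argument is the classical Koksma proof that appears in that reference: Weyl's criterion reduces matters to a single nonzero frequency $h$, the $L^2$-estimate $\int_{-T}^{T}|S_N|^2 = O((\log N)/N)$ comes from the off-diagonal bound $\sum_{m\neq n}|x_n-x_m|^{-1}=O(\delta^{-1}\log N)$ for $\delta$-separated points, one passes to the lacunary subsequence $N=k^2$ to make the $L^2$-bounds summable, and then fills the gaps by the trivial bound on the extra $O(\sqrt{N})$ terms. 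All steps are correct as written (the ``monotone convergence'' you invoke is really Tonelli for nonnegative series, but that is what you mean), so there is nothing substantive to add or compare.
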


The next definition is motivated by the theory of linear recurrence sequences, where a similar notion determines many properties
of the set of zeros of such sequences, see Definition \ref{degeneate relation def} and the discussion that follows.
\begin{definition}\label{deg matrix}
A non-singular matrix $A\in M_d(\mathbb{Z})$ is called \emph{degenerate} if it has two distinct eigenvalues whose ratio is a root of unity, and otherwise it is called \emph{non-degenerate}.
\end{definition}

We can now state our first main result.
\begin{theorem}\label{mainprop}
Let $A\in M_d(\mathbb{Z})$ be non-singular and let $\mathbf{v}\in\mathbb{Q}^d$. The sequence $(A^n\omega\mathbf{v})_{n=0}^\infty$ is u.d.\ mod $1$ for a.e.\ $\omega\in\mathbb{R}$ if and only if $A$ is non-degenerate with no eigenvalues that are roots of unity and the vectors $\mathbf{v},A\mathbf{v},\dots,A^{d-1}\mathbf{v}$ are linearly independent.
\end{theorem}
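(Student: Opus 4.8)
The plan is to fix a test vector and pass to scalar sequences via Weyl's criterion. For $\mathbf h\in\Z^d$ set $x_n^{(\mathbf h)}:=\langle A^n\mathbf v,\mathbf h\rangle=\mathbf h^t A^n\mathbf v$, so that $\langle A^n\omega\mathbf v,\mathbf h\rangle=\omega\,x_n^{(\mathbf h)}$. By Cayley--Hamilton each $(x_n^{(\mathbf h)})_n$ satisfies the recurrence relation associated with $A$, and writing $\mathbf v=q^{-1}\mathbf w$ with $\mathbf w\in\Z^d$, $q\in\N$, every $x_n^{(\mathbf h)}$ lies in $q^{-1}\Z$; in particular these are rationals with a common bounded denominator. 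Proposition \ref{h thm} reduces u.d.\ mod $1$ of $(A^n\omega\mathbf v)_n$ to u.d.\ mod $1$ of $(\omega\,x_n^{(\mathbf h)})_n$ for every nonzero $\mathbf h$, and since $\Z^d$ is countable the a.e.-$\omega$ quantifier may be interchanged with the quantifier over $\mathbf h$ by taking a countable intersection of full-measure sets.

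For the sufficiency direction I would fix $\mathbf h\neq0$ and show $(x_n^{(\mathbf h)})_n$ has finite total multiplicity, then invoke Theorem \ref{metric thm}. First I rule out that $x_n^{(\mathbf h)}=c\rho^n$ for a root of unity $\rho$: if $c=0$ then $\mathbf h\perp A^i\mathbf v$ for all $i$, contradicting the linear independence of $\mathbf v,\dots,A^{d-1}\mathbf v$; if $c\neq0$ then $x_n^{(\mathbf h)}=\rho\,x_{n-1}^{(\mathbf h)}$, so the minimal polynomial of the sequence is $x-\rho$, and as this divides $\chi_A$ the number $\rho$ is an eigenvalue of $A$, contradicting the no-root-of-unity-eigenvalue hypothesis. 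Since $A$ is non-degenerate the associated recurrence is non-degenerate, so Theorem \ref{Evertse} applies and the total multiplicity is finite. Because the $x_n^{(\mathbf h)}$ lie in $q^{-1}\Z$, finitely many coincidences means there is an $N$ beyond which the terms are pairwise distinct, hence $\delta$-separated with $\delta=q^{-1}$; Theorem \ref{metric thm} then yields u.d.\ mod $1$ of $(\omega\,x_n^{(\mathbf h)})_n$ for a.e.\ $\omega$, and the reduction above finishes this direction.

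For necessity I argue the contrapositive through the principle that if $x_n^{(\mathbf h)}$ equals a fixed value $v$ on a set of indices of positive density, then for every $\omega$ the point $\omega v\bmod 1$ absorbs a positive proportion of the mass, so the empirical distributions cannot converge to Haar measure and $(\omega\,x_n^{(\mathbf h)})_n$ is not u.d.\ mod $1$; by Proposition \ref{h thm} this defeats u.d.\ of the vector orbit for every $\omega$. I would produce such an $\mathbf h\in\Z^d\setminus\{0\}$ whenever a hypothesis fails. If $\mathbf v,\dots,A^{d-1}\mathbf v$ are dependent, their rational span is a proper subspace and any nonzero integral $\mathbf h$ in its orthogonal complement gives $x_n^{(\mathbf h)}\equiv0$. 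If some eigenvalue is a root of unity of order $m$, then a suitable rational combination of the periodic solutions $\rho^n$ over a Galois orbit is a nonzero rational periodic solution of the $\chi_A$-recurrence; using the cyclic-vector hypothesis to realize its initial data and clearing denominators produces an integral $\mathbf h$ with $x_n^{(\mathbf h)}$ periodic and nonzero, hence attaining a value on a positive-density set.

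The main obstacle is the remaining degeneracy case, where conditions two and three hold but $A$ has distinct eigenvalues $\lambda\neq\mu$ with $\lambda/\mu$ a primitive $k$-th root of unity. Here I would pass to $C:=A^k$: since $\lambda^k=\mu^k=:\nu$ and $A$ is non-derogatory (a consequence of the linear independence in condition three), each of $\lambda,\mu$ contributes a \emph{separate} Jordan block of $C$ for $\nu$, because the $k$-th power of a single Jordan block with nonzero eigenvalue is again similar to a single Jordan block of the same size. Thus $\nu$ has geometric multiplicity at least two in $C$, so $C$ is derogatory and its minimal polynomial has degree less than $d$. Consequently the sampled orbit $\{A^{km}\mathbf v=C^m\mathbf v\}_{m\ge0}$ spans a proper rational subspace, and choosing a nonzero integral $\mathbf h$ orthogonal to it forces $x_{km}^{(\mathbf h)}\equiv0$, i.e.\ the value $0$ on the positive-density set $k\N$. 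Verifying this Jordan-block collapse and the resulting proper-subspace containment is the delicate point; once it is in place, the positive-density-atom principle completes the contrapositive.
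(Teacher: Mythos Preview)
Your proof is correct. The sufficiency direction matches the paper's almost exactly; the only cosmetic difference is that the paper clears denominators at the outset (replacing $\mathbf v$ by an integer multiple) and takes $\delta=1$, whereas you carry $q^{-1}$ throughout.

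The necessity direction is where you genuinely diverge. For the degenerate case the paper proves a standalone lemma (Lemma~\ref{degenerate lemma}): it builds an integer matrix $B$ expressing $x_{ik}$ in terms of $x_0,\dots,x_{d-1}$, observes that $B$ annihilates the complex vector $(y_0,\dots,y_{d-1})$ with $y_n=\lambda^n-(\rho\lambda)^n$, deduces $\det B=0$, and extracts an integer kernel vector to serve as initial data for a recurrence sequence vanishing on $k\N$. Your route through the Jordan structure of $C=A^k$ is more conceptual, and the step you flag as ``delicate'' is in fact routine: writing $J_m(\lambda)=\lambda I+N$ one has $J_m(\lambda)^k-\lambda^k I = k\lambda^{k-1}N+(\text{higher powers of }N)$, which has nilpotency index exactly $m$ when $\lambda\neq 0$, so $J_m(\lambda)^k\sim J_m(\lambda^k)$; the blocks for $\lambda$ and $\mu$ both become blocks for $\nu=\lambda^k=\mu^k$, $C$ is derogatory, and the $C$-cyclic subspace of $\mathbf v$ is proper. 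For the root-of-unity eigenvalue the paper notes that a non-real root of unity $\rho$ makes $\rho/\bar\rho$ a root of unity and defers to the degenerate case, then handles $\rho=\pm1$ by exhibiting $(x_n)=(c\rho^n)$ directly; your Galois-orbit trace is a uniform alternative that avoids this case split.

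Both approaches reach the same conclusion with comparable effort. Your Jordan argument requires working over $\C$ and the fact about powers of Jordan blocks, whereas the paper's Lemma~\ref{degenerate lemma} stays over $\Z$ throughout; on the other hand, your argument explains \emph{why} the subsampled orbit collapses (loss of cyclicity under $A^k$), which the determinant trick leaves opaque.
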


\begin{example}
Let $\begingroup
\renewcommand*{\arraystretch}{0.7}
\setlength\arraycolsep{3pt}
A = \begin{pmatrix}
    2 &   \\
      & 3 \\
  \end{pmatrix} \endgroup$, then by Theorem \ref{mainprop},
  $\begingroup
\renewcommand*{\arraystretch}{0.7}
\setlength\arraycolsep{3pt}
(A^n\begin{pmatrix}
    \omega \\ \omega
  \end{pmatrix})_{n=0}^\infty \endgroup$ is u.d.\ mod $1$ for a.e.\ $\omega\in\mathbb{R}$. Notice that it does not follow from the fact that $A$ is an ergodic transformation of $\T^2$ (with respect to Haar measure), since the diagonal $\{(\omega, \omega)\}$ has Haar measure zero.
\end{example}

Before proving this result, we recall a few definitions and results on linear recurrence relations.

\begin{definition}
A \emph{linear recurrence relation}, or simply a \emph{recurrence relation}, is an expression of the form
\begin{equation} \label{relation}
u_n = \sum_{i=0}^{d-1}\alpha_i u_{n-d+i}    
\end{equation}
for some $\alpha_0,\dots,\alpha_{d-1}\in\mathbb{C}$, $\alpha_0\neq0$, and we say that the recurrence relation is of \emph{order $d$}.  The \emph{companion polynomial} associated with the recurrence relation \eqref{relation} is the polynomial
$x^d - \sum_{i=0}^{d-1}\alpha_i x^{i}$,
and its roots are the \emph{roots of the recurrence}.
A \emph{linear recurrence sequence}, or simply a \emph{recurrence sequence}, is a complex sequence that satisfies some recurrence relation. A recurrence sequence is of \emph{order $d$} if the recurrence relation of minimal order that it satisfies is of order $d$.
\end{definition}
Notice that a recurrence sequence of order $d$ is determined by its first $d$ terms, which are called the \emph{initial values} of the sequence.

The following is one of the most fundamental facts about recurrence relations.
\begin{theorem}[{see \cite[Subsection 1.1.6]{everest2003recurrence}}] \label{solution}
Denote by $\lambda_1,\dots,\lambda_m$ the distinct roots of the recurrence relation \eqref{relation} and by $n_1,\dots,n_m$ their respective multiplicities.
The sequences that satisfy this recurrence relation are exactly the sequences $(x_n)_{n=0}^\infty\subset\mathbb{C}$ of the form
\begin{equation*}
    x_n = \sum_{i=1}^m p_i(n)\lambda_i^n, \quad n\in\mathbb{N}
\end{equation*}
\emph{(we define $\mathbb{N}$ to include $0$)}, where $p_i$ is a polynomial of degree $\deg p_i< n_i$ for $i=1,\dots,m$.
\end{theorem}

Many questions are concerned with the set of zeros of a given recurrence sequence. These questions and their generalizations have led to the following definitions.
\begin{definition}
The \emph{total multiplicity} of a complex sequence $(x_n)_{n=0}^\infty$ is defined to be
\[ |\{(n,m)\in\mathbb{N}\times\mathbb{N}:n\neq m,\, x_n=x_m\}|. \]
\end{definition}

\begin{definition}\label{degeneate relation def}
A recurrence relation is called \emph{degenerate} if it has two distinct roots whose ratio is a root of unity. Otherwise, the recurrence relation is called \emph{non-degenerate}.
\end{definition}

Given a degenerate recurrence relation, it is easy to construct a corresponding sequence with infinitely many zeros: take $\lambda^{n}-(\rho\lambda)^{n}$, where $\lambda, \rho \lambda$ are two distinct roots of the recurrence and $\rho$ is a root of unity.
The following deep theorem shows that under reasonable assumptions, this is the only case in which we can construct such a sequence from a recurrence relation.
\begin{theorem}[{van der Poorten} {\cite{van1981some}}, {Evertse} {\cite[Corollary 4]{evertse1984sums}}]\label{Evertse}
Suppose $(x_n)_{n=0}^\infty$ is a sequence of algebraic numbers that satisfies a non-degenerate recurrence relation. If the sequence is not of the form
$(x_n) = (c\rho^n)$,
for some constant $c$ and a root of unity $\rho$, then the total multiplicity of the sequence is finite.
\end{theorem}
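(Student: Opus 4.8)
The plan is to recast the problem as counting solutions of a two-variable polynomial--exponential equation and to feed it into the Diophantine finiteness theory underlying such equations. By Theorem~\ref{solution} I may write (renaming, to avoid a clash with the index $m$) $x_n=\sum_{i=1}^r p_i(n)\lambda_i^n$ with distinct roots $\lambda_1,\dots,\lambda_r$ of respective multiplicities $n_1,\dots,n_r$ and $\deg p_i<n_i$. All quantities lie in a number field $K$, and I fix a finite set $S$ of places of $K$ containing the archimedean ones together with every place at which some $\lambda_i$ fails to be a unit, so that each $\lambda_i$ is an $S$-unit. A coincidence $x_n=x_m$ with $n\neq m$ is then exactly a solution $(n,m)\in\N^2$ of
\[ \sum_{i=1}^r p_i(n)\lambda_i^n \;-\; \sum_{i=1}^r p_i(m)\lambda_i^m \;=\; 0, \]
a single polynomial--exponential equation in the two variables $n,m$ whose ``coefficients'' are the polynomial values and whose ``variables'' are the $S$-units $\lambda_i^n,\lambda_i^m$.

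The one-variable prototype is the statement that a non-degenerate recurrence has only finitely many zeros, namely the Skolem--Mahler--Lech theorem (its zero set is a finite union of arithmetic progressions together with a finite set) combined with non-degeneracy, which forces all of the progressions to be trivial. First I would record this, as it disposes of any \emph{pure} vanishing: a relation $\sum_{i\in A}p_i(n)\lambda_i^n=0$ in the single variable $n$ can hold for infinitely many $n$ only if it is identically zero, which is impossible for a proper nonempty $A$ since the $\lambda_i$ are distinct and the $p_i\not\equiv 0$. For the genuinely two-variable equation the engine is the finiteness theorem for $S$-unit equations and its refinement allowing polynomial coefficients (van der Poorten--Schlickewei and Evertse, and Schlickewei--Schmidt in the polynomial--exponential form), all resting on the Subspace Theorem: the number of \emph{non-degenerate} solutions, those at which no proper subsum of the equation vanishes, is finite, indeed bounded in terms of $r$ and the rank of the $S$-unit group. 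This at once bounds the coincidences $(n,m)$ that give rise to a non-degenerate relation.

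The whole difficulty is concentrated in the \emph{degenerate} coincidences, those for which some proper subsum vanishes. These are governed by the structure theory of solutions of the two-variable equation: the solution set decomposes as a finite exceptional set together with families supported on cosets $\{(n,m):\prod_i\lambda_i^{a_i n}\prod_j\lambda_j^{b_j m}=\text{const}\}$ arising from multiplicative relations among the roots. The crux is to show, using non-degeneracy, that every such positive-dimensional family is spurious except the one responsible for the excluded case. Restricting the equation to such a coset produces a lower-order polynomial--exponential equation in a single parameter, and one must verify that non-degeneracy of the original $\lambda_i$ propagates so that this restricted equation is again non-degenerate, hence has finitely many solutions by the warm-up, unless it collapses to the single-root situation $x_n=p_1(n)\lambda_1^n$. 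The base case $r=1$ is then settled by a direct estimate: its total multiplicity is finite unless $p_1$ is constant and $\lambda_1$ is a root of unity, which is precisely the excluded form $x_n=c\rho^n$. I expect this propagation-of-non-degeneracy step---equivalently, the bookkeeping that rules out all the bad cosets---to be the main obstacle, and it is exactly the content supplied by the cited theorems of van der Poorten and Evertse.
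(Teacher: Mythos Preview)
The paper does not supply its own proof of this theorem: it is quoted as a result from the literature (van der Poorten; Evertse) and used as a black box in the proof of Theorem~\ref{mainprop}. So there is nothing in the paper to compare your argument against.

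As for the proposal itself: your outline is the correct architecture for how this result is actually proved, namely rewriting $x_n=x_m$ as a polynomial--exponential $S$-unit equation in two variables and invoking the Subspace Theorem machinery (Evertse, van der Poorten--Schlickewei, Schlickewei--Schmidt) to bound the non-degenerate solutions, then analysing the vanishing-subsum families. You have also correctly located the crux: the only way infinitely many degenerate coincidences can survive is through a multiplicative relation among the $\lambda_i$, and non-degeneracy of the recurrence is what kills the nontrivial cosets, leaving only the excluded single-root case $x_n=c\rho^n$.

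That said, what you have written is a plan rather than a proof. You explicitly defer the one genuinely hard step---controlling the degenerate solutions and showing non-degeneracy propagates to the restricted equations on each coset---back to the very references the theorem cites. This is the substance of the theorem, not a detail; without carrying it out you have reduced the statement to itself. If the goal is to include a self-contained proof, you would need to execute the coset analysis in full (or at least quote a precise polynomial--exponential finiteness theorem, e.g.\ from Schlickewei--Schmidt or Evertse--Schlickewei--Schmidt, whose hypotheses you then verify). If the goal is merely to indicate why the theorem is true, your sketch is accurate and appropriately flags where the depth lies.
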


The next definition provides the connection between the theory of linear recurrence relations and the rest of the topics discussed in the current paper.
\begin{definition}
Let $A\in M_d(\mathbb{Z})$ be non-singular and let $ x^d - \sum_{i=0}^{d-1}\alpha_i x^{i} $ be its characteristic polynomial. The \emph{recurrence relation associated with $A$} is $u_n = \sum_{i=0}^{d-1}\alpha_i u_{n-d+i}$. \end{definition}
Note that the companion polynomial associated with the recurrence relation is the characteristic polynomial of $A$, so $A$ is degenerate (see Definition \ref{deg matrix}) if and only if its associated recurrence relation is degenerate. Moreover, since $A$ is an integer matrix,
$\alpha_0,\dots,\alpha_{d-1}$ are integers as well.

We will need the following two lemmas.
\begin{lemma}\label{rec lemma}
Let $A\in M_d(\mathbb{Z})$ be non-singular and let $\mathbf{v}\in\mathbb{Q}^d$. Suppose $\mathbf{v},A\mathbf{v},\dots,A^{d-1}\mathbf{v}$ are linearly independent, then a sequence $(x_n)_{n=0}^\infty\subset\mathbb{Q}$ satisfies the recurrence relation associated with $A$ if and only if there exists $\mathbf{s}\in\mathbb{Q}^{d}$ such that
$x_n = \langle A^{n}\mathbf{v},\mathbf{s}\rangle$ for every $n\in\mathbb{N}$, and the sequence is identically zero if and only if $\mathbf{s}=\vect{0}$. 
\end{lemma}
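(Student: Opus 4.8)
The plan is to pivot on the $d\times d$ matrix $M$ whose $i$-th row (for $i=0,\dots,d-1$) is $(A^i\mathbf{v})^t$; this is exactly the matrix of the initial-value map $\mathbf{s}\mapsto(\langle A^i\mathbf{v},\mathbf{s}\rangle)_{i=0}^{d-1}$, so that $(x_0,\dots,x_{d-1})^t=M\mathbf{s}$ whenever $x_n=\langle A^n\mathbf{v},\mathbf{s}\rangle$. The hypothesis that $\mathbf{v},A\mathbf{v},\dots,A^{d-1}\mathbf{v}$ are linearly independent is precisely the statement that $M$ is invertible, and since $A$ and $\mathbf{v}$ have rational entries, both $M$ and $M^{-1}$ are rational. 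The entire lemma will reduce to combining this invertibility with the Cayley--Hamilton theorem.

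For the ``if'' direction I would suppose $x_n=\langle A^n\mathbf{v},\mathbf{s}\rangle$ for some $\mathbf{s}\in\mathbb{Q}^d$. The sequence is then rational because $A,\mathbf{v},\mathbf{s}$ are. To verify the recurrence, note that since $x^d-\sum_{i=0}^{d-1}\alpha_i x^i$ is the characteristic polynomial of $A$, Cayley--Hamilton gives $A^d=\sum_{i=0}^{d-1}\alpha_i A^i$, whence for every $n\ge d$ we have $A^n\mathbf{v}=A^{n-d}A^d\mathbf{v}=\sum_{i=0}^{d-1}\alpha_i A^{n-d+i}\mathbf{v}$. Pairing with $\mathbf{s}$ and using linearity of the inner product in its first argument yields $x_n=\sum_{i=0}^{d-1}\alpha_i x_{n-d+i}$, which is exactly the recurrence relation associated with $A$.

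For the ``only if'' direction I would take a rational sequence $(x_n)$ satisfying the recurrence. A solution of an order-$d$ relation is determined by its first $d$ terms, since the relation expresses each later term through the preceding $d$. Setting $\mathbf{s}:=M^{-1}(x_0,\dots,x_{d-1})^t\in\mathbb{Q}^d$, the sequence $y_n:=\langle A^n\mathbf{v},\mathbf{s}\rangle$ satisfies the same recurrence by the ``if'' direction, and by construction $y_i=x_i$ for $i=0,\dots,d-1$; two solutions agreeing on the first $d$ terms coincide, so $x_n=\langle A^n\mathbf{v},\mathbf{s}\rangle$ for all $n$. The final assertion then follows from the same invertibility: the sequence is identically zero iff its first $d$ terms vanish iff $M\mathbf{s}=\mathbf{0}$ iff $\mathbf{s}=\mathbf{0}$.

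I do not anticipate a genuine obstacle, since the argument is essentially a bijection count in the $d$-dimensional solution space, with the linear-independence hypothesis entering only through the invertibility of $M$. The only points requiring care are bookkeeping ones: fixing the sign and indexing conventions so that the Cayley--Hamilton identity $A^d=\sum_{i=0}^{d-1}\alpha_i A^i$ matches the displayed recurrence, and confirming that rationality is preserved throughout, which holds because $M^{-1}$ is rational.
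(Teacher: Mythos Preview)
Your proposal is correct and follows essentially the same approach as the paper: the ``if'' direction is Cayley--Hamilton, the ``only if'' direction solves for $\mathbf{s}$ from the initial values using the linear independence of $\mathbf{v},A\mathbf{v},\dots,A^{d-1}\mathbf{v}$ and then invokes uniqueness of a recurrence solution with given initial data, and the last assertion is again the linear independence. The only difference is cosmetic: you make the initial-value matrix $M$ explicit, whereas the paper simply says ``there exists $\mathbf{s}$.''
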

\begin{proof}
If $(x_n) = (\langle A^{n}\mathbf{v},\mathbf{s}\rangle)$, the first part of the claim follows immediately from Cayley-Hamilton theorem.
Conversely, since $\mathbf{v},A\mathbf{v},\dots,A^{d-1}\mathbf{v}$ are linearly independent, there exists $\mathbf{s}\in\mathbb{Q}^d$ such that $\langle A^{i}\mathbf{v},\mathbf{s}\rangle =x_i$ for $i=0,\dots,d-1$.
The sequences $(\langle A^{n}\mathbf{v}, \mathbf{s}\rangle)_{n=0}^\infty$ and $(x_n)_{n=0}^\infty$ satisfy the same recurrence relation and have the same initial values, so they must be equal. The last part is clear from the linear independence of $\mathbf{v},A\mathbf{v},\dots,A^{d-1}\mathbf{v}$.
\end{proof}

\begin{lemma}\label{degenerate lemma}
Suppose that \eqref{relation} is a degenerate recurrence relation with $\alpha_0,\dots,\alpha_{d-1}\in\Z$. Then there exists a sequence of integers, which is not identically zero, satisfies \eqref{relation} and has an arithmetic subsequence of zeros.
\end{lemma}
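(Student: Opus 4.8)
The plan is to exploit the definition of degeneracy directly: since the recurrence relation \eqref{relation} is degenerate, it has two distinct roots $\lambda, \mu$ with $\lambda/\mu = \rho$ a root of unity, say $\rho^r = 1$ for some $r \in \N$, $r \geq 1$. The idea is to construct a sequence that vanishes precisely along the arithmetic progression $\{r, 2r, 3r, \dots\}$ (or a shift thereof), by choosing initial values cleverly. Using Theorem \ref{solution}, any sequence satisfying \eqref{relation} has the form $x_n = \sum_i p_i(n)\lambda_i^n$; I would first build a complex-valued solution that does the job, and then pass to an integer solution.

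The key step is to produce a nonzero solution vanishing on an arithmetic progression. First I would consider the simplest case where $\lambda$ and $\mu$ are both simple roots, and try the sequence $y_n = \lambda^n - c\,\mu^n$ for a suitable constant $c$. We want $y_{kr} = 0$ for all $k \geq 1$, i.e.\ $\lambda^{kr} = c\,\mu^{kr}$. Since $\rho = \lambda/\mu$ satisfies $\rho^r = 1$, we have $(\lambda/\mu)^{kr} = \rho^{krr}\cdots$ — more carefully, $\lambda^{kr}/\mu^{kr} = (\lambda/\mu)^{kr} = \rho^{kr}$, and if $\rho$ is a primitive $r$-th root of unity then $\rho^{kr} = 1$, so taking $c = 1$ gives $y_{kr} = \mu^{kr}(\rho^{kr} - 1) = 0$ for every $k$. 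Thus $y_n = \lambda^n - \mu^n$ already vanishes on the full progression $r\N$, and it is not identically zero because $\lambda \neq \mu$. (If $\rho$ is an $r$-th root of unity but not primitive, replace $r$ by the order of $\rho$; the resulting progression is still arithmetic.) This sequence satisfies \eqref{relation} since $\lambda, \mu$ are roots of the companion polynomial.

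The main obstacle is that $y_n = \lambda^n - \mu^n$ is generally complex, not an integer sequence, whereas the claim demands an integer solution. To fix this I would use that the coefficients $\alpha_0, \dots, \alpha_{d-1}$ are integers (rational would suffice), so the companion polynomial has rational coefficients and its roots come in full Galois-conjugate sets. The space of rational (equivalently integer, after clearing denominators) solutions of \eqref{relation} is a $\Q$-vector space of dimension $d$, and it is spanned by the real and imaginary parts, or more robustly by the Galois-symmetrization, of the complex solutions. Concretely, I would apply any element $\gamma$ of $\mathrm{Gal}(\overline{\Q}/\Q)$ to $y_n$: since Galois conjugation permutes the roots and fixes the integer coefficients of the recurrence, $(\gamma(y_n))_n$ is again a solution, and its zero set contains $r\N$ because $y_{kr} = 0$ forces $\gamma(y_{kr}) = \gamma(0) = 0$. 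Summing over the Galois orbit, the trace $z_n = \sum_\gamma \gamma(y_n) = \mathrm{Tr}(\lambda^n) - \mathrm{Tr}(\mu^n)$ lands in $\Q$, still satisfies \eqref{relation}, and still vanishes on $r\N$; clearing denominators yields an integer solution. I would then check that $z_n$ is not identically zero — this is the one genuinely delicate point, since a priori the trace could collapse. If it does vanish identically, I would instead symmetrize against a generic rational linear functional, i.e.\ take $\mathrm{Tr}(\theta \lambda^n) - \mathrm{Tr}(\theta' \mu^n)$ for suitable algebraic numbers $\theta, \theta'$ chosen so that the two conjugate sums do not cancel, using that $\lambda$ and $\mu$ generate genuinely different geometric progressions. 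Verifying this non-vanishing, and handling the bookkeeping when $\lambda$ or $\mu$ has multiplicity greater than one (where one multiplies by an appropriate polynomial factor), is where the care is needed.
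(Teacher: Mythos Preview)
Your construction of $y_n = \lambda^n - \mu^n$ is exactly the starting point the paper uses. The gap is precisely where you locate it: passing from this complex solution to an integer one. Your Galois-trace candidate $z_n = \mathrm{Tr}(\lambda^n) - \mathrm{Tr}(\mu^n)$ will \emph{typically} vanish identically, not just occasionally: whenever $\lambda$ and $\mu$ are Galois conjugates (automatic if the companion polynomial is irreducible, and common otherwise), their power-sum traces coincide for every $n$. The smallest instance is companion polynomial $x^2+1$ with roots $\pm i$: here $y_n = i^n - (-i)^n$ vanishes on $2\N$, but its trace is identically zero, while the integer solution the lemma promises is simply $0,1,0,-1,0,1,\dots$. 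Your fallback with auxiliary $\theta,\theta'$ is not a proof as written: for the sequence to still vanish on $r\N$ you actually need $\theta=\theta'$ (since $\lambda^{kr}=\mu^{kr}$), and then non-vanishing requires invoking non-degeneracy of the trace form on the splitting field --- doable, but not done. The remark about multiplying by polynomial factors in the higher-multiplicity case is also unnecessary: $\lambda^n-\mu^n$ is already a solution of \eqref{relation} regardless of multiplicities.

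The paper sidesteps Galois theory entirely with a short linear-algebra argument over $\Z$. Using the integer recurrence one writes $x_{jk}=\sum_{i=0}^{d-1}\beta_{j,i}x_i$ for an integer matrix $B=(\beta_{j,i})\in M_d(\Z)$ acting on initial values; the existence of the nonzero complex solution $y$ with $y_0=y_k=\cdots=y_{(d-1)k}=0$ forces $\det B=0$, so $\ker B$ contains a nonzero integer vector $(z_0,\dots,z_{d-1})$. Taking these as initial values yields an integer solution of \eqref{relation} with $z_0=z_k=\cdots=z_{(d-1)k}=0$; since $(z_{kn})_{n\ge 0}$ is itself a linear recurrence sequence of order at most $d$, these $d$ initial zeros force the whole arithmetic subsequence to vanish. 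This is both shorter and more robust than the symmetrization route.
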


\begin{proof}
Let $\lambda,\rho\lambda$
be two distinct roots of the recurrence, where $\rho$ is a root of unity of order $k$.
Using the recurrence relation \eqref{relation}, we can find integers $\beta_{i,j}$, $0\le i,j \le d-1$, such that every sequence $(x_n)_{n=0}^\infty$ that satisfies this recurrence relation also satisfies
\begin{equation}\label{substituted}
x_{i\cdot k}=\sum_{j=0}^{d-1}\beta_{i,j}x_{j}, \quad i=0,\dots,d-1.
\end{equation}
Define $B=(\beta_{i,j})_{0\le i,j\le d-1}\in M_{d}(\mathbb{Z})$ and
a sequence $(y_{n})_{n=0}^{\infty}$ by $y_{n}=\lambda^{n}-(\rho\lambda)^{n}$,
and notice that it is not identically zero, it satisfies the recurrence relation \eqref{relation} by Theorem \ref{solution} and it vanishes on the set $\{0,k,2k,\dots\}$. Thus, it follows from \eqref{substituted} that
\[
B\cdot(y_{0},\dots,y_{d-1})^{t}=(y_{0},y_{k},\dots,y_{(d-1)k})^{t}=0,
\]
and hence $\det B=0$ and there exists a non-zero vector $(z_{0},\dots,z_{d-1})^{t}\in\mathbb{Z}^{d}\cap\ker B$.
Let $(z_{n})$ be the recurrence sequence defined by these initial values and the recurrence relation \eqref{relation}. By \eqref{substituted},
\[
(z_{0},z_{k},\dots,z_{(d-1)k})^{t}=B\cdot(z_{0},\dots,z_{d-1})^{t}=0,
\]
and since $(z_{kn})_{n=0}^{\infty}$ is also a linear recurrence sequence of order at most $d$ (see \cite[Theorem 1.3]{everest2003recurrence}), this
subsequence must be identically zero.
\end{proof}

\begin{proof}[Proof of Theorem \ref{mainprop}]
Since we can replace $\mathbf{v}$ by an integer vector with the same span, we can assume without loss of generality that $\vect{v}\in\Z^d$. First we prove the sufficiency of the conditions.
If we prove that for every non-zero $\mathbf{h}\in\Z^d$, $(\langle A^{n}\omega\mathbf{v},\mathbf{h}\rangle)_{n=0}^\infty$ is u.d.\ mod $1$ for a.e.\ $\omega\in\R$, then the set of $\omega$'s that work for all $\mathbf{h}$'s is also of full measure, and by Proposition \ref{h thm} we are done.
Fix a non-zero $\mathbf{h}\in\Z^d$. By Lemma \ref{rec lemma}, $(\langle A^{n}\mathbf{v},\mathbf{h}\rangle)_{n=0}^\infty$ is not identically zero and satisfies the recurrence relation associated with $A$. Since no eigenvalue of $A$ is a root of unity, Theorems \ref{Evertse} and \ref{solution} imply that the total multiplicity of this integer sequence is finite, and the sufficiency of the conditions follows from Theorem \ref{metric thm} (we can take $\delta=1$). 

Conversely, suppose first that $\mathbf{v},A\mathbf{v},\dots,A^{d-1}\mathbf{v}$
are linearly dependent. Notice that
\[\{A^n\omega\mathbf{v} :\omega\in\R,\,n\in\N\} \subseteq \Span\{\mathbf{v},A\mathbf{v},\dots,A^{d-1}\mathbf{v}\},\]
and since this subspace is spanned by at most $d-1$ integer vectors, the set of its fractional parts is not dense in $\T^{d}$, let alone contains a u.d.\ sequence mod $1$. We can assume for the rest of the proof that $\mathbf{v},A\mathbf{v},\dots,A^{d-1}\mathbf{v}$ are linearly independent.

Suppose now that $A$ is degenerate. By Lemma \ref{degenerate lemma}, we can take a sequence of integers $(x_n)_{n=0}^\infty$, which is not identically zero, satisfies the recurrence relation associated with $A$ and such that $x_{kn} = 0$ for some $k\ge 2$ and every $n$.
By Lemma \ref{rec lemma} there exist $\mathbf{h}\in\mathbb{Z}^{d}\setminus\{\mathbf{0}\}$ and $c\in\mathbb{N}\setminus\{0\}$ such that $\langle A^{n}\mathbf{v},\mathbf{h}\rangle=cx_{n}$ for every $n$. Consequently, for every $\omega\in\mathbb{R}$ we have
\[
\limsup_{N\to\infty}\frac{|\{0\le n<N:\langle A^{n}\mathbf{v},\mathbf{h}\rangle\omega\bmod 1\in[0,\frac{1}{2k}]\}|}{N}\ge\frac{1}{k},
\]
so $\langle A^{n}\omega\mathbf{v},\mathbf{h}\rangle$ is not u.d.\
mod $1$ and again by Proposition \ref{h thm}, $A^{n}\omega\mathbf{v}$ is also not u.d.\ mod $1$.

Finally, if $\rho$ is an eigenvalue of $A$ which is also a root
of unity, then so is $\overline{\rho}$. If $\rho\neq\overline{\rho}$,
then $A$ is degenerate and since we already considered this case
we may assume that $\rho=\pm1$. Proceeding as before, $(\langle A^{n}\mathbf{v},\mathbf{h}\rangle)=(c\rho^{n})$ for some $\mathbf{h}\in\mathbb{Z}^{d}\setminus\{\mathbf{0}\}$ and $c\in\mathbb{N}\setminus\{0\}$.
It follows that for every $\omega\in\mathbb{R}$, \[(\langle A^{n}\omega\mathbf{v},\mathbf{h}\rangle)_{n=0}^{\infty}\subseteq\{\pm c\omega\},\]
and once again by Proposition \ref{h thm}, $(A^{n}\omega\mathbf{v})_{n=0}^\infty$
is not u.d.\ mod $1$.
\end{proof}

\begin{corollary}
If $(A^{n}\omega\mathbf{v})$
is u.d.\ mod $1$ for some $\omega\in\mathbb{R}$, then the same is true for a.e.\ $\omega\in\mathbb{R}$.
\end{corollary}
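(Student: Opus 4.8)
The plan is to derive this statement directly from Theorem \ref{mainprop} by observing that the necessity half of its proof in fact establishes something stronger than what is literally stated. I would argue by contraposition: the claim to isolate is that whenever one of the three conditions in Theorem \ref{mainprop} fails (i.e., $A$ is degenerate, $A$ has an eigenvalue that is a root of unity, or $\mathbf{v},A\mathbf{v},\dots,A^{d-1}\mathbf{v}$ are linearly dependent), the sequence $(A^n\omega\mathbf{v})$ is not u.d.\ mod $1$ for \emph{any} $\omega\in\R$, rather than merely on a set of positive measure. Granting this, the corollary is immediate: if $(A^n\omega_0\mathbf{v})$ is u.d.\ mod $1$ for some $\omega_0\in\R$, then all three conditions of Theorem \ref{mainprop} must hold, and hence the sufficiency direction of that theorem yields u.d.\ mod $1$ for a.e.\ $\omega$.

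To verify the claim I would revisit the three cases of the necessity argument and note that each already produces an obstruction that is uniform in $\omega$. When $\mathbf{v},A\mathbf{v},\dots,A^{d-1}\mathbf{v}$ are linearly dependent, the orbit $\{A^n\omega\mathbf{v}\}$ is contained in a proper subspace spanned by at most $d-1$ integer vectors for every fixed $\omega$, so its reduction mod $1$ is never dense in $\T^d$. When $A$ is degenerate, the estimate
\[
\limsup_{N\to\infty}\frac{|\{0\le n<N:\langle A^{n}\mathbf{v},\mathbf{h}\rangle\omega\bmod 1\in[0,\tfrac{1}{2k}]\}|}{N}\ge\frac{1}{k}
\]
is derived for every $\omega\in\R$, so Proposition \ref{h thm} rules out u.d.\ mod $1$ for each such $\omega$. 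Finally, when $\pm1$ is an eigenvalue, the values $(\langle A^n\omega\mathbf{v},\mathbf{h}\rangle)$ lie in the two-point set $\{\pm c\omega\}$ for every $\omega$, again precluding u.d.\ mod $1$ via Proposition \ref{h thm}.

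The only point requiring care—and the closest thing to an obstacle here—is confirming that none of these three arguments silently discarded a measure-zero set of $\omega$. A reading of the proof of Theorem \ref{mainprop} shows that each of the three estimates is phrased ``for every $\omega\in\R$,'' so no exceptional set is lost and the obstructions are genuinely uniform. Combining these uniform-in-$\omega$ obstructions with the sufficiency direction of Theorem \ref{mainprop} then gives the stated dichotomy, completing the proof of the corollary.
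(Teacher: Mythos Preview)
Your proposal is correct and follows essentially the same approach as the paper's proof: the paper simply notes that the necessity argument in Theorem \ref{mainprop} already shows that failure of any one of the three conditions precludes u.d.\ mod $1$ for \emph{every} $\omega\in\R$, and then invokes the sufficiency direction. Your more detailed case-by-case verification is a faithful elaboration of exactly this point.
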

\begin{proof}
We saw in the previous proof that if one of the conditions of Theorem \ref{mainprop} does not hold then for every $\omega\in\mathbb{R}$, $(A^{n}\omega\mathbf{v})_{n=0}^\infty$ is not u.d.\ mod $1$.
\end{proof}
\begin{corollary}\label{k corollary}
If $(A^{n}\omega\mathbf{v})$
is u.d.\ mod $1$ for a.e.\ $\omega\in\mathbb{R}$, then so is $(A^{kn+\ell}\omega\mathbf{v})_{n=0}^\infty$ for every $k \ge 1$ and $\ell\in\N$.
\end{corollary}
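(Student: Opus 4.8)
The plan is to reduce the statement to a direct application of Theorem \ref{mainprop}. Writing $B = A^k$ and $\mathbf{w} = A^\ell\mathbf{v}$, we have $A^{kn+\ell}\omega\mathbf{v} = B^n\omega\mathbf{w}$, and since $A\in M_d(\Z)$ is non-singular with $\mathbf{v}\in\Q^d$, the matrix $B\in M_d(\Z)$ is non-singular and $\mathbf{w}\in\Q^d$. By Theorem \ref{mainprop} it therefore suffices to verify, for the pair $(B,\mathbf{w})$, that (i) $B$ is non-degenerate, (ii) no eigenvalue of $B$ is a root of unity, and (iii) the vectors $\mathbf{w}, B\mathbf{w},\dots,B^{d-1}\mathbf{w}$ are linearly independent. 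Our hypothesis is precisely that these three conditions hold for the pair $(A,\mathbf{v})$, again by Theorem \ref{mainprop}, so the task is to transfer each of them from $A$ to its power $B = A^k$.

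The two spectral conditions (i) and (ii) are handled through the eigenvalues. Denoting by $\lambda_1,\dots,\lambda_m$ the distinct eigenvalues of $A$, the eigenvalues of $B$ are the powers $\lambda_1^k,\dots,\lambda_m^k$. I would first record the key observation that these powers are \emph{pairwise distinct}: if $\lambda_i^k=\lambda_j^k$ with $i\neq j$, then $(\lambda_i/\lambda_j)^k=1$, so $\lambda_i/\lambda_j$ is a root of unity, contradicting the non-degeneracy of $A$. Condition (ii) is then immediate, since $\lambda_i^k$ being a root of unity would force $\lambda_i$ to be one as well. For (i), if two distinct eigenvalues $\lambda_i^k\neq\lambda_j^k$ of $B$ had a ratio $(\lambda_i/\lambda_j)^k$ equal to a root of unity, then $\lambda_i/\lambda_j$ would be a root of unity with $\lambda_i\neq\lambda_j$, again contradicting the non-degeneracy of $A$.

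The linear independence (iii) is the main obstacle, since it amounts to showing that the cyclic vector $\mathbf{v}$ for $A$ remains cyclic for the power $A^k$. First, as $A^\ell$ is invertible, applying $A^{-\ell}$ shows that (iii) is equivalent to the linear independence of $\mathbf{v}, A^k\mathbf{v},\dots,A^{(d-1)k}\mathbf{v}$, so I may take $\ell=0$. Suppose for contradiction that $\sum_{j=0}^{d-1}c_j A^{jk}\mathbf{v}=\mathbf{0}$ with the $c_j$ not all zero, and set $Q(y)=\sum_{j=0}^{d-1}c_j y^{j}$ and $q(x)=Q(x^k)$, so that $q(A)\mathbf{v}=\mathbf{0}$. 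The linear independence of $\mathbf{v},A\mathbf{v},\dots,A^{d-1}\mathbf{v}$ means that the characteristic polynomial $\chi_A$, of degree $d$, is the monic polynomial $p$ of least degree with $p(A)\mathbf{v}=\mathbf{0}$, whence $\chi_A\mid q$. Writing $\chi_A(x)=\prod_{i=1}^m (x-\lambda_i)^{s_i}$ with $\sum_i s_i=d$, and using that each $\lambda_i\neq0$ so that the substitution $x\mapsto x^k$ is unramified at $\lambda_i$, the multiplicity of $\lambda_i$ as a root of $q(x)=Q(x^k)$ equals the multiplicity of $\lambda_i^k$ as a root of $Q$. Hence $\chi_A\mid q$ forces $Q$ to vanish to order at least $s_i$ at each point $\lambda_i^k$. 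Since the $\lambda_i^k$ are pairwise distinct by the observation above, the polynomial $\prod_{i=1}^m (y-\lambda_i^k)^{s_i}$, of degree $d$, divides $Q$; but $\deg Q\le d-1$, forcing $Q\equiv 0$ and contradicting the choice of the $c_j$.

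This completes the verification of the three hypotheses for $(B,\mathbf{w})$, and Theorem \ref{mainprop} then yields the claim. I expect the only delicate points to be the unramifiedness of $x\mapsto x^k$ at the nonzero eigenvalues, which turns a multiplicity condition on $q$ into one on $Q$, and, above all, the pairwise distinctness of the powers $\lambda_i^k$: this is exactly where the non-degeneracy of $A$ enters, and without it a polynomial $Q$ of degree $\le d-1$ could satisfy all the vanishing conditions, so that (iii) — and hence the corollary — would fail.
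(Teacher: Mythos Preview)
Your proof is correct and reaches the same conclusion as the paper, but the key step---the linear independence of $A^\ell\mathbf{v},A^{k+\ell}\mathbf{v},\dots,A^{k(d-1)+\ell}\mathbf{v}$---is handled by a genuinely different argument. The paper argues by duality and recurrence sequences: a nontrivial linear relation would give a nonzero $\mathbf{h}\in\Z^d$ with $\langle A^{ki+\ell}\mathbf{v},\mathbf{h}\rangle=0$ for $i=0,\dots,d-1$, hence the integer recurrence sequence $(\langle A^n\mathbf{v},\mathbf{h}\rangle)$ has an arithmetic subsequence of zeros; since the recurrence is non-degenerate, an external Skolem--Mahler--Lech type result forces the whole sequence to vanish, contradicting Lemma~\ref{rec lemma}. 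Your route is purely linear-algebraic: you observe that the annihilator of the cyclic vector $\mathbf{v}$ is exactly $\chi_A$, so any relation $Q(A^k)\mathbf{v}=0$ forces $\chi_A\mid Q(x^k)$, and then a multiplicity count at the (nonzero, hence unramified) eigenvalues, together with the pairwise distinctness of the $\lambda_i^k$ coming from non-degeneracy, pushes $\deg Q\ge d$, a contradiction. Your argument is more elementary and entirely self-contained---it avoids the external citation---while the paper's proof stays within the recurrence-sequence language already developed and makes the role of non-degeneracy visible through the zero-set structure of recurrence sequences. Either way, the spectral conditions (i) and (ii) on $A^k$ are the easy part, and both proofs hinge on exactly the same consequence of non-degeneracy, namely that the powers $\lambda_i^k$ remain distinct.
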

\begin{proof}
By Theorem \ref{mainprop} we just need to show that $A^\ell\mathbf{v},A^{k+\ell}\mathbf{v},\dots,A^{k(d-1)+\ell}\mathbf{v}$ are linearly independent. Suppose $\langle A^{ki + \ell}\vect{v}, \vect{h}\rangle = 0$ for $i=0,\dots,d-1$ and some $\vect{h}\in\Z^d$, then $(\langle A^n\vect{v},\vect{h}\rangle)$ has an arithmetic subsequence of zeros, but since the associated recurrence relation is non-degenerate, $(\langle A^n\vect{v},\vect{h}\rangle)$ must be identically zero (see \cite[Corollary C.1]{exponential_diophantine}), and thus $\vect{h} = \vect{0}$.
\end{proof}
\begin{remark}
(a) Meiri proved that if an integer sequence $(x_n)$ satisfies a non-degenerate recurrence relation that has no roots that are roots of unity, then in fact $(\omega x_n)$ is u.d.\ mod $1$ for $\mu$-a.e.\ $\omega$, where $\mu$ belongs to some collection of Borel measures on $\T$, including Lebesgue measure \cite[Theorem 5.2]{meiri1998entropy}. For Lebesgue measure, we gave a simple (one-line) proof of this fact, relying on the powerful result of van der Poorten and Evertse, whereas the proof of Meiri's result is considerably more complicated, and uses $p$-adic analysis instead.

(b) Pushkin obtained a somewhat similar result, showing that
given a connected analytic manifold in $\R^d$ that is not contained in any hyperplane, Lebesgue-a.e.\ vector in that manifold is absolutely normal \cite[Theorem 2]{pushkin1992borel}.
\end{remark}

\begin{proposition} \label{pf prop}
Suppose that $A\in M_d(\Z)$ has a unique dominant eigenvalue $\theta_1 > 1$, and that its characteristic polynomial is irreducible over $\Q$. Let $\mathbf{v} = \sum_{i=1}^d c_i \vect{v}_i\in\R^d$ where $\vect{v}_1,\dots,\vect{v}_d$ are the eigenvectors of $A$, $\vect{v}_1$ corresponds to $\theta_1$,\; $c_1,\dots,c_d \in\C$ and $c_1 \neq 0$.
Then $(A^{kn+\ell}\omega\mathbf{v})_{n=0}^\infty$ is u.d.\ mod $1$ for every $k\ge 1$, $\ell\in\N$ and a.e.\ $\omega\in\R$.
\end{proposition}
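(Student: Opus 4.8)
The plan is to combine Weyl's criterion (Proposition \ref{h thm}) with Koksma's metric theorem (Theorem \ref{metric thm}), exploiting the strictly dominant real eigenvalue to produce the required separation directly, rather than routing through the van der Poorten--Evertse machinery used in the proof of Theorem \ref{mainprop}. Note that Corollary \ref{k corollary} is not available here, since $\mathbf{v}$ need not be rational, so the parameters $k$ and $\ell$ must be handled directly; fortunately they merely replace the relevant eigenvalue by $\theta_1^k>1$ and introduce a harmless shift. Fix $k\ge 1$, $\ell\in\N$, and a non-zero $\mathbf{h}\in\Z^d$, and set $y_n=\langle A^{kn+\ell}\mathbf{v},\mathbf{h}\rangle$. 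Since $A$ and $\mathbf{v}$ are real, $(y_n)_{n=0}^\infty$ is a real sequence, and because $\langle A^{kn+\ell}\omega\mathbf{v},\mathbf{h}\rangle=\omega y_n$, it suffices (intersecting the resulting full-measure sets over the countably many $\mathbf{h}$ and applying Proposition \ref{h thm}) to show, via Theorem \ref{metric thm}, that $(y_n)$ is eventually $\delta$-separated for some $\delta>0$.

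Expanding $\mathbf{v}$ in the eigenbasis gives $y_n=\sum_{i=1}^d c_i\theta_i^{\ell}\langle\mathbf{v}_i,\mathbf{h}\rangle(\theta_i^{k})^n$, whose leading coefficient is $C:=c_1\theta_1^{\ell}\langle\mathbf{v}_1,\mathbf{h}\rangle$. The crux of the argument --- and the only place where irreducibility is used --- is to show that $\langle\mathbf{v}_1,\mathbf{h}\rangle\neq0$ for every non-zero $\mathbf{h}\in\Z^d$. I would establish this as follows. Because the characteristic polynomial is irreducible over $\Q$, the matrix $A$ is conjugate over $\Q$ to the companion matrix of that polynomial, whose $\theta_1$-eigenvector is $(1,\theta_1,\dots,\theta_1^{d-1})^t$; transporting back, $\mathbf{v}_1$ can be taken of the form $\big(q_1(\theta_1),\dots,q_d(\theta_1)\big)^t$ with fixed $q_i\in\Q[x]$, and its coordinates are $\Q$-linearly independent because $\theta_1$ has degree $d$ over $\Q$. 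Hence $\langle\mathbf{v}_1,\mathbf{h}\rangle=\sum_i q_i(\theta_1)h_i$ is a $\Q$-linear combination of $1,\theta_1,\dots,\theta_1^{d-1}$, which vanishes only when $\mathbf{h}=\mathbf{0}$. (Equivalently, applying the Galois automorphism $\theta_1\mapsto\theta_j$ turns $\langle\mathbf{v}_1,\mathbf{h}\rangle$ into $\langle\mathbf{v}_j,\mathbf{h}\rangle$, so a single vanishing would force $\mathbf{h}$ to annihilate every eigenvector, i.e.\ $\mathbf{h}=\mathbf{0}$.)

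With $\langle\mathbf{v}_1,\mathbf{h}\rangle\neq0$ and $c_1\neq0$ we obtain $C\neq0$, and since $\theta_1$ is the unique eigenvalue of maximal modulus and is real with $\theta_1>1$, the term $C(\theta_1^k)^n$ strictly dominates: $y_n/\theta_1^{kn}\to C$. As $(y_n)$ is real and $\theta_1^{kn}$ is real, $C$ is real, so $|y_n|\to\infty$ and, eventually, $y_n$ is monotone with $|y_{n+1}-y_n|=|C|\theta_1^{kn}(\theta_1^k-1)(1+o(1))\to\infty$. Consequently there is $N$ such that $|y_n-y_m|\ge\delta$ for all $n,m>N$ with $n\neq m$ (one may take $\delta=1$), which is exactly the hypothesis of Theorem \ref{metric thm}. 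The expected main obstacle is the non-vanishing of $\langle\mathbf{v}_1,\mathbf{h}\rangle$; everything else is a routine growth estimate. I would also take care to observe that the argument does not secretly require non-degeneracy of $A$ --- indeed it does not, which is precisely why this proposition needs no such hypothesis.
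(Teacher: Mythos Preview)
Your proof is correct and follows essentially the same route as the paper: reduce via Proposition~\ref{h thm} to the scalar sequences $y_n=\langle A^{kn+\ell}\mathbf{v},\mathbf{h}\rangle$, show the leading coefficient $c_1\theta_1^{\ell}\langle\mathbf{v}_1,\mathbf{h}\rangle$ is non-zero, use strict dominance of $\theta_1$ to get eventual separation, and invoke Theorem~\ref{metric thm}. The only variation is in the non-vanishing step $\langle\mathbf{v}_1,\mathbf{h}\rangle\neq 0$: the paper argues that $\langle\mathbf{v}_1,(A^t)^n\mathbf{h}\rangle=0$ for all $n$, so $\mathbf{h},\dots,(A^t)^{d-1}\mathbf{h}$ span a proper $A^t$-invariant rational subspace, contradicting irreducibility, whereas your companion-matrix/Galois argument reaches the same conclusion by an equally standard route.
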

\begin{proof}
First let us show that the entries of $\vect{v}_1$ are rationally independent. Suppose that $\langle \vect{v}_1,\vect{h}\rangle = 0$ for some $\vect{h}\in\Z^d$, then also
\[ 0 = \langle A^n \vect{v}_1, \vect{h} \rangle = \langle \vect{v}_1, (A^t)^n \vect{h} \rangle, \]
and hence $\vect{h},\dots, (A^t)^{d-1} \vect{h}$ must be linearly dependent. Therefore, they span an $A^t$-invariant $\Q^d$-subspace of dimension at most $d-1$, and the characteristic polynomial of $A^t$ restricted to this subspace divides the characteristic polynomial of $A$, which means $\vect{h}=\vect{0}$.

Next, let $k\ge 1$, $\ell\in\N$ and $\vect{h}\in\Z^d\setminus\{\vect{0}\}$, and consider the real sequence $(\langle A^{kn+\ell} \vect{v}, \vect{h}\rangle)_{n=0}^\infty$. Since $\langle c_1\vect{v}_1, \vect{h}\rangle\neq 0$ and $\theta_1$ is the unique dominant eigenvalue of $A$, we have
\[ \frac{\langle A^{k(n+1)+\ell} \vect{v}, \vect{h}\rangle}{\langle A^{kn+\ell} \vect{v}, \vect{h}\rangle} \underset{n}{\to} \theta_1^k,\]
so in particular $\lvert \langle A^{km+\ell} \vect{v}, \vect{h}\rangle - \langle A^{kn+\ell} \vect{v}, \vect{h}\rangle\rvert > 1$ for every sufficiently large $n$ and every $m >n$. By Theorem \ref{metric thm}, $(\langle A^{kn+\ell} \omega \vect{v}, \vect{h}\rangle)$ is u.d.\ mod $1$ for a.e.\ $\omega\in\R$,
and we conclude by repeating the argument from the beginning of the proof of Theorem \ref{mainprop}.
\end{proof}

\section{The minimal subspace}\label{minimal subspace section}
While the distribution of some sequences of the form $(A^n\omega \vect{v})$ is just far from uniform, 
another reason why such a sequence may fail to be uniformly distributed in the torus $\T^d$, is that it is actually uniformly distributed in a proper subtorus. We would not like to exclude such orbits, since uniform distribution in a subtorus will suffice for our purpose. Thus, we introduce the following definition, which aims at finding the right subspace to look at for this matter. 
\begin{definition}
Let $A\in M_d(\Z)$ and $\vect{v}\in\mathbb{R}^d\setminus\{\vect{0}\}$. The \emph{minimal subspace of $\vect{v}$} (with respect to $A$) is $\rspan{W}<\R^d$, where $W < \Q^d$ is the minimal $A$-invariant subspace (over $\mathbb{Q}$), such that $\vect{v}\in\rspan W$.
\end{definition}
The minimal subspace of a vector is determined by an interplay between the dimension of $\rspan\{\vect{v},A\vect{v},\dots\}$ and the extent to which the vector $\vect{v}$ is irrational. For example, the minimal subspace of an eigenvector $\vect{v}$ with rational entries is just $\R\vect{v}$ (which accounts for the fact that the image of $\R \vect{v}$ in $\T^d$ is a one-dimensional torus), while at the other extreme the minimal subspace of an eigenvector with rationally independent entries is $\R^d$ (which accounts for the fact that the image of $\R\vect{v}$ in $\T^d$ is not contained in any subtorus).

The following lemma asserts that, as implied in the definition, there is a unique minimal subspace $W < \Q^d$ with these properties, and hence the minimal subspace is unique as well.
\begin{lemma}
Let $W_1,W_2 < \Q^d$, then $\rspan(W_1\cap W_2) = \rspan W_1\cap\rspan W_2$. 
\end{lemma}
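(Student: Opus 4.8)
The plan is to prove the set equality $\rspan(W_1 \cap W_2) = \rspan W_1 \cap \rspan W_2$ by establishing both inclusions separately. The forward inclusion $\rspan(W_1 \cap W_2) \subseteq \rspan W_1 \cap \rspan W_2$ is immediate and I would dispose of it first: since $W_1 \cap W_2$ is contained in both $W_1$ and $W_2$, taking real spans is monotone, so $\rspan(W_1 \cap W_2)$ is contained in each of $\rspan W_1$ and $\rspan W_2$, hence in their intersection. No rationality is used here.

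The substance is the reverse inclusion $\rspan W_1 \cap \rspan W_2 \subseteq \rspan(W_1 \cap W_2)$, and the key point is that taking real spans of $\Q$-subspaces commutes with intersection precisely because one can choose rational bases. The plan is to pick a basis $\vect{b}_1,\dots,\vect{b}_r \in \Q^d$ of $W_1 \cap W_2$ and extend it, within each $W_i$, to a rational basis: choose $\vect{c}_1,\dots,\vect{c}_s \in W_1$ so that $\vect{b}_1,\dots,\vect{b}_r,\vect{c}_1,\dots,\vect{c}_s$ is a basis of $W_1$, and similarly $\vect{d}_1,\dots,\vect{d}_t \in W_2$ completing to a basis of $W_2$. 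The crucial claim is that the combined list $\vect{b}_1,\dots,\vect{b}_r,\vect{c}_1,\dots,\vect{c}_s,\vect{d}_1,\dots,\vect{d}_t$ is \emph{linearly independent over} $\Q$; this holds because $W_1 + W_2$ has $\Q$-dimension $r + s + t$ by the dimension formula $\dim(W_1+W_2) = \dim W_1 + \dim W_2 - \dim(W_1 \cap W_2)$, and these $r+s+t$ vectors lie in and span $W_1 + W_2$.

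The main obstacle, and the heart of the argument, is leveraging the fact that a family of vectors with rational (indeed integer) entries is linearly independent over $\Q$ if and only if it is linearly independent over $\R$. This is where the hypothesis $W_i < \Q^d$ is essential: a real linear dependence among rational vectors can be rewritten as a homogeneous linear system with rational coefficients, which then has a nontrivial rational solution, so $\Q$-independence forces $\R$-independence (the converse being trivial). Consequently the full list above is also $\R$-linearly independent. Now take any $\vect{x} \in \rspan W_1 \cap \rspan W_2$. Writing $\vect{x}$ in terms of the real span of $W_1$ gives $\vect{x} = \sum a_i \vect{b}_i + \sum b_j \vect{c}_j$, and in terms of the real span of $W_2$ gives $\vect{x} = \sum a_i' \vect{b}_i + \sum c_k \vect{d}_k$, with real coefficients. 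Subtracting yields a real linear relation among $\vect{b}_1,\dots,\vect{b}_r,\vect{c}_1,\dots,\vect{c}_s,\vect{d}_1,\dots,\vect{d}_t$, and by the $\R$-independence just established, all the $\vect{c}$- and $\vect{d}$-coefficients vanish. Hence $\vect{x} = \sum a_i \vect{b}_i \in \rspan\{\vect{b}_1,\dots,\vect{b}_r\} = \rspan(W_1 \cap W_2)$, completing the reverse inclusion and thus the lemma.
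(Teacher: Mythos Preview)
Your proof is correct. The paper takes a slightly different, more concise route: it first observes that for any $W<\Q^d$ one has $\dim_\R\rspan W=\dim_\Q W$ (by placing a $\Q$-basis of $W$ as columns of a matrix and noting nonsingularity is field-independent), then notes the two obvious inclusions $\rspan(W_1\cap W_2)\subseteq\rspan W_1\cap\rspan W_2$ and $\rspan(W_1+W_2)\subseteq\rspan W_1+\rspan W_2$, and finishes by applying the dimension formula $\dim(U\cap V)=\dim U+\dim V-\dim(U+V)$ over both fields to force equality of dimensions, hence equality of the subspaces. Your argument and the paper's share the same underlying principle---$\Q$-independence of rational vectors is equivalent to $\R$-independence---but you deploy it via an explicit basis-and-element chase, whereas the paper packages everything into a one-line dimension count. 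The paper's approach is shorter; yours is more constructive and makes the role of the extended basis (and what would go wrong for non-rational subspaces) more transparent.
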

\begin{proof}
Clearly, $\dim_\R \rspan W \le \dim_\Q W$ for any $W< \Q^d$, and since we can define a non-singular matrix (over both fields) with columns that contain a basis of $W$, the dimensions are equal.
The inclusions $\rspan(W_1\cap W_2) \subseteq \rspan W_1\cap\rspan W_2$ and $\rspan(W_1 + W_2) \subseteq \rspan W_1 + \rspan W_2$ are clear, and the lemma follows from the identity $\dim (U \cap V) = \dim U + \dim V - \dim (U + V)$.
\end{proof}

\begin{example} \label{minimal subspace example}
{(a)} Let $A\in M_d(\Z)$. If $\vect{v}\in\Q^d\setminus\{\vect{0}\}$, then its minimal subspace is the cyclic subspace $\Span_\R \{\vect{v},A\vect{v},\dots,A^{d-1}\vect{v}\}$. It is invariant by Cayley-Hamilton theorem, and $\vect{v},A\vect{v},\dots,A^{r-1}\vect{v}$ is a basis for this subspace, where $r\le d$ is the maximal integer such that these vectors are linearly independent.

{(b)} Let $ A = \begin{pmatrix}
    1 & 1 & 1 \\
    1 & 1 & 1 \\
    0 & 4 & 0 \\
  \end{pmatrix}$ and $\vect{v} = (\sqrt{5}+1, \sqrt{5}+1, 4)^t$. Since $\vect{v}$ is clearly not a (real) multiple of a vector in $\Q^3$, its minimal subspace must be at least two-dimensional. It follows that the $A$-invariant subspace $\rspan\{(1,1,0)^t, \, (0,0,1)^t\}$ is the minimal subspace of $\vect{v}$.

{(c)} Generalizing the last example, suppose that $A\in M_d(\Z)$ is primitive. Let $\vect{u}\in\R^d$ be its Perron-Frobenius eigenvector, corresponding to the Perron-Frobenius eigenvalue $\theta_1$, and let $p_{\theta_1}$ be the minimal polynomial of $\theta_1$ over $\Q$.
Any $A$-invariant $\Q$-subspace $W$ with $\vect{u}\in\rspan{W}$ must have $\dim W \ge \deg(p_{\theta_1})$.
Since $\theta_1$ is a simple eigenvalue, it follows from the primary decomposition theorem (see \cite[Chapter 6, Theorem 12]{hoffmann1971linear}) that the minimal and characteristic polynomials of $A$ restricted to $U\coloneqq\ker p_{\theta_1}(A)$ equal $p_{\theta_1}$.
Thus, $U$ is the minimal subspace of $\vect{u}$, and in fact, of any non-zero $\vect{v}\in U$.
\end{example}

\begin{lemma}\label{basis lemma}
Let $A\in M_d(\Z)$ and let $V$ be the minimal subspace of some non-zero $\vect{v}\in\R^d$.
There exists a basis of integer vectors for $V$, such that every integer vector in $V$ has integer coordinates with respect to that basis. In particular, the map $\restr{A}{V}$, written in that basis, is an integer matrix (rather than rational).
\end{lemma}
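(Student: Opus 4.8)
The plan is to work with the lattice $L \coloneqq V \cap \Z^d$ of integer vectors lying in $V$, and to show that any $\Z$-basis of $L$ has the desired properties. First I would record two facts that are immediate from the definition of the minimal subspace: that $V$ is \emph{rational}, in the sense that $V = \rspan W$ for some $W < \Q^d$, and that $V$ is $A$-invariant (if $\vect{x} = \sum_i a_i \vect{w}_i$ with $a_i\in\R$ and $\vect{w}_i\in W$, then $A\vect{x} = \sum_i a_i A\vect{w}_i \in \rspan W = V$, since $W$ is $A$-invariant over $\Q$). The crucial step is then to verify that $L$ is a lattice of full rank in $V$, i.e.\ that $L$ spans $V$ over $\R$. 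This holds because $W$, being a $\Q$-subspace, has a basis of rational vectors; clearing denominators yields a basis of integer vectors of $W$, each lying in $L$, so $\rspan L \supseteq \rspan W = V$. Combined with the trivial inclusion $\rspan L \subseteq V$, we get $\rspan L = V$, so $L$ has rank equal to $\dim_\R V$.

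Since $L$ is a subgroup of the free abelian group $\Z^d$, it is itself free abelian, of rank equal to $\dim_\R V$; let $\vect{b}_1,\dots,\vect{b}_k$ be a $\Z$-basis. Because this family spans $V$ over $\R$ and has cardinality $k = \dim_\R V$, it is an $\R$-basis of $V$. By construction every integer vector in $V$ lies in $L$, hence is an integer combination of $\vect{b}_1,\dots,\vect{b}_k$; this is precisely the asserted integrality of coordinates.

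For the final assertion, observe that $A$ maps $L$ into itself: if $\vect{b}\in L$, then $A\vect{b}\in\Z^d$ (as $A$ is an integer matrix) and $A\vect{b}\in V$ (as $V$ is $A$-invariant), so $A\vect{b}\in L$. In particular each $A\vect{b}_i$ is an integer combination of $\vect{b}_1,\dots,\vect{b}_k$, so the matrix of $\restr{A}{V}$ written in the basis $(\vect{b}_i)$ has integer entries.

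The main (and essentially only) obstacle is the full-rank claim for $L$, which is exactly where the rationality of the minimal subspace is used; everything afterward rests on the standard structure fact that subgroups of $\Z^d$ are free abelian. I would also note that the \emph{minimality} of $V$ plays no role in this lemma—only that $V$ is a rational, $A$-invariant subspace—so the argument applies verbatim to any such subspace.
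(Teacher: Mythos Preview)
Your argument is correct and follows exactly the same route as the paper: take $L=V\cap\Z^d$, use that subgroups of $\Z^d$ are free abelian, and check that a $\Z$-basis of $L$ does the job. The paper's proof is a two-line sketch that omits precisely the details you supply (full rank of $L$ via clearing denominators, and the verification that $A$ preserves $L$); your added remark that only rationality and $A$-invariance of $V$ are used, not minimality, is also correct.
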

\begin{proof}
Notice that $\Z^d\cap V$ is a subgroup of $\Z^d$, and thus it is free abelian. It is easy to check that a basis of this free abelian group is also a basis of $V$ which meets all the above requirements.
\end{proof}
Such a basis will be called a \emph{lattice basis of $V$}. 

\begin{definition} \label{ud in minimal subspace}
Let $\mathcal{B}$ be a lattice basis of $V$, and consider the isomorphism $\varphi_\mathcal{B}:V\to \R^{r}$ (where $r=\dim V$) that maps a vector to its coordinate vector $\vect{v}\mapsto [\vect{v}]_\mathcal{B}$. A sequence $(\vect{x}_n)\subset V$ is said to be \emph{u.d.\ mod $1$ in $V$} if the sequence $(\varphi_\mathcal{B}(\vect{x}_n))$ is u.d.\ mod $1$ in $\R^{r}$.
\end{definition}

\begin{remark}
It is not hard to see that this definition is independent of the choice of the lattice basis, and that $(\vect{x}_n)$ is u.d.\ mod $1$ in $V$ if and only if for every lattice basis $\mathcal{B}$ and every $\Z^d$-periodic continuous function $f:V\to\C$,
$\lim_{N\to\infty}\frac{1}{N}\sum_{n=0}^{N-1}f(\vect{x}_n) = \int_{\T^{r}}f\circ \varphi^{-1}_\mathcal{B} \,dm_{r}$,
where $m_r$ is the $r$-dimensional Haar measure.
\end{remark}
We call a vector $\vect{v}$ positive and write $\vect{v} > \vect{0}$ if it is entrywise positive, and the same applies to matrices.
\begin{corollary} \phantomsection \label{ud corollary}
\begin{enumerate}[font=\normalfont, label=(\alph*)]
    \item \label{ud corollary a} Let $A\in M_d(\Z)$ and $\mathbf{v}\in\Q^d\setminus\{\vect{0}\}$.
    Let $V = \Span_\R \{\vect{v},A\vect{v},\dots,A^{d-1}\vect{v}\}$, and suppose that $\restr{A}{V}$ is non-singular. The sequence $(A^{kn+\ell}\omega\mathbf{v})_{n=0}^\infty$ is u.d.\ mod $1$ in $V$ for every $k\ge 1$, $\ell\in\N$ and a.e.\ $\omega\in\mathbb{R}$ if and only if $\restr{A}{V}$ is non-degenerate with no eigenvalues that are roots of unity.
    \item \label{ud corollary b} Suppose that $A\in M_d(\Z)$ is primitive with a Perron-Frobenius eigenvalue $\theta_1$. Denote by $p_{\theta_1}$ the minimal polynomial of $\theta_1$ over $\Q$ and let $\mathbf{v}\in \ker p_{\theta_1}(A)$, $\mathbf{v} > \vect{0}$. Then $(A^{kn+\ell}\omega\mathbf{v})_{n=0}^\infty$ is u.d.\ mod $1$ in $\ker p_{\theta_1}(A)$ for every $k\ge 1$, $\ell\in\N$ and a.e.\ $\omega\in\mathbb{R}$.
\end{enumerate}
\end{corollary}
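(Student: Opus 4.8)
The plan is to reduce both parts to statements on the minimal subspace, where the results of Section \ref{ud section} apply directly. Fix the relevant minimal subspace --- call it $V$ in part \ref{ud corollary a} (here $V=\Span_\R\{\vect{v},\dots,A^{d-1}\vect{v}\}$ is the cyclic subspace, which by Example \ref{minimal subspace example}\ref{ud corollary a} is the minimal subspace of $\vect{v}$) and $U=\ker p_{\theta_1}(A)$ in part \ref{ud corollary b}. Let $r=\dim V$ and choose a lattice basis $\mathcal{B}$ as provided by Lemma \ref{basis lemma}, so that $\restr{A}{V}$ expressed in $\mathcal{B}$ is an integer matrix $A_V\in M_r(\Z)$. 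The coordinate isomorphism $\varphi_\mathcal{B}\colon V\to\R^r$ is linear and satisfies $\varphi_\mathcal{B}(A\vect{x})=A_V\varphi_\mathcal{B}(\vect{x})$ for $\vect{x}\in V$, so it intertwines the dynamics: $\varphi_\mathcal{B}(A^{kn+\ell}\omega\vect{v})=A_V^{kn+\ell}\omega\vect{v}_V$, where $\vect{v}_V\coloneqq\varphi_\mathcal{B}(\vect{v})$. By Definition \ref{ud in minimal subspace}, $(A^{kn+\ell}\omega\vect{v})$ is u.d.\ mod $1$ in $V$ if and only if $(A_V^{kn+\ell}\omega\vect{v}_V)$ is u.d.\ mod $1$ in $\R^r$, so it suffices to analyze the latter. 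Note that $A_V$ and $\restr{A}{V}$ represent the same operator, hence share characteristic polynomial, eigenvalues and degeneracy, and that $\varphi_\mathcal{B}$ carries rational vectors of $V$ to rational vectors of $\R^r$.

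For part \ref{ud corollary a} I would apply Theorem \ref{mainprop} to $A_V\in M_r(\Z)$ and $\vect{v}_V\in\Q^r$, which is non-singular because $\restr{A}{V}$ is. The key simplification is that the linear-independence hypothesis of Theorem \ref{mainprop} holds automatically here: by Example \ref{minimal subspace example}\ref{ud corollary a}, $\vect{v},A\vect{v},\dots,A^{r-1}\vect{v}$ is a basis of $V$, so its image $\vect{v}_V,A_V\vect{v}_V,\dots,A_V^{r-1}\vect{v}_V$ is a basis of $\R^r$. Thus Theorem \ref{mainprop} reduces to the assertion that $(A_V^n\omega\vect{v}_V)$ is u.d.\ mod $1$ for a.e.\ $\omega$ exactly when $A_V$ --- equivalently $\restr{A}{V}$ --- is non-degenerate with no root-of-unity eigenvalues. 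The forward implication of the corollary follows by specializing to $k=1,\ell=0$; for the converse, the eigenvalue condition gives the case $k=1,\ell=0$ via Theorem \ref{mainprop}, and Corollary \ref{k corollary} upgrades it to every $(k,\ell)$, the countably many full-measure sets intersecting to a single full-measure set of $\omega$.

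For part \ref{ud corollary b} I would instead verify the hypotheses of Proposition \ref{pf prop} for $A_U\in M_r(\Z)$. By Example \ref{minimal subspace example}\ref{ud corollary b}, $U$ is the minimal subspace and $\restr{A}{U}$ has characteristic polynomial $p_{\theta_1}$, which is irreducible over $\Q$; moreover $\theta_1$ is the unique dominant eigenvalue (of $A$, hence of $A_U$) and $\theta_1>1$ by primitivity, so the spectral hypotheses hold and $A_U$ is diagonalizable with simple spectrum. The remaining --- and main --- point is to check that the coefficient $c_1$ of $\vect{v}$ along the Perron eigenvector $\vect{v}_1=\vect{u}$ is non-zero. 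Here I would use positivity: letting $\vect{w}>\vect{0}$ be the left Perron--Frobenius eigenvector, for any eigenvector $\vect{v}_i$ of $\restr{A}{U}$ with eigenvalue $\theta_i\neq\theta_1$ one has $(\theta_i-\theta_1)\langle\vect{v}_i,\vect{w}\rangle=\langle A\vect{v}_i,\vect{w}\rangle-\langle\vect{v}_i,A^t\vect{w}\rangle=0$, so $\langle\vect{v}_i,\vect{w}\rangle=0$. Writing $\vect{v}=\sum_{i=1}^r c_i\vect{v}_i$ gives $\langle\vect{v},\vect{w}\rangle=c_1\langle\vect{u},\vect{w}\rangle$, and both pairings are strictly positive since $\vect{v},\vect{u},\vect{w}>\vect{0}$; hence $c_1=\langle\vect{v},\vect{w}\rangle/\langle\vect{u},\vect{w}\rangle\neq0$. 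As $\varphi_\mathcal{B}$ sends $\vect{v}_1$ to the dominant eigenvector of $A_U$, the image $\vect{v}_U$ has non-zero dominant component, so Proposition \ref{pf prop} yields u.d.\ mod $1$ of $(A_U^{kn+\ell}\omega\vect{v}_U)$ in $\R^r$ for every $k,\ell$ and a.e.\ $\omega$; translating back through $\varphi_\mathcal{B}$ finishes the proof.

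The delicate step is the positivity argument establishing $c_1\neq0$: one must ensure the left Perron eigenvector annihilates exactly the non-dominant eigendirections and that \emph{strict} positivity of $\vect{v}$ forces a genuinely non-zero pairing. Everything else is bookkeeping, namely the faithful transfer of uniform distribution, eigenvalues, degeneracy and rationality across the conjugation $\varphi_\mathcal{B}$, together with the countable-intersection argument over the pairs $(k,\ell)$.
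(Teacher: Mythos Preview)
Your proof is correct and follows essentially the same route as the paper: part (a) is the combination of Theorem \ref{mainprop}, Corollary \ref{k corollary} and Example \ref{minimal subspace example}(a), and part (b) combines Proposition \ref{pf prop} with Example \ref{minimal subspace example}(b), with you supplying the explicit $\varphi_\mathcal{B}$-conjugation details that the paper leaves implicit. The only cosmetic difference is how $c_1\neq0$ is obtained: the paper cites positivity of the Perron projection $P$ to get $P\vect{v}>\vect{0}$, while you compute $c_1=\langle\vect{v},\vect{w}\rangle/\langle\vect{u},\vect{w}\rangle$ via orthogonality of the non-dominant eigenvectors to the left Perron eigenvector $\vect{w}$ --- but since $P=\vect{u}\vect{w}^t/\langle\vect{u},\vect{w}\rangle$, these are the same argument.
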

\begin{proof}
(a) This is just the combination of Theorem \ref{mainprop}, Corollary \ref{k corollary} and Example \ref{minimal subspace example}(a).

(b) It is well-known that the \emph{Perron projection} $P$, defined by $P\vect{u} = \vect{u}$ for the Perron-Frobenius eigenvector $\vect{u}$ and $P\vect{w} = \vect{0}$ for any other generalized eigenvector, is a positive matrix (see for example, \cite[Chapter 8]{meyer2000matrix}). It follows that $P\vect{v} > \vect{0}$, so the $\vect{u}$-component of $\vect{v}$ is not $0$, and we conclude by combining Proposition \ref{pf prop} and Example \ref{minimal subspace example}(c).
\end{proof}

\section{Applications to substitutions} \label{substitutions section}
Let $\mathcal{A}=\{0,\dots,d-1\}$ be a finite alphabet with $d\ge 2$. 
A \emph{substitution} is a map $\zeta:\mathcal{A}\to\mathcal{A}^+$, where $\mathcal{A}^+=\bigcup_{n=1}^\infty \mathcal{A}^n$ is the collection of all finite words.
This map is extended to $\mathcal{A}^+$ and to $\mathcal{A}^{\Z}$ by concatenation, and these extensions are called substitutions and denoted by $\zeta$ as well.
The \emph{substitution dynamical system}, also sometimes called the \emph{substitution $\Z$-action}, is the space
\[ X_\zeta = \{ x\in\mathcal{A}^\mathbb{Z}:\text{every finite subword of } x \text{ is also a subword of } \zeta^n(a) \text{ for some }a\in\mathcal{A}\text{ and }n\in\N \}, \]
together with the left shift map on $\mathcal{A}^{\Z}$, restricted to $X_\zeta$.
To every substitution we associate its \emph{substitution matrix}, which is the $d\times d$ integer matrix $S_\zeta\in M_d(\mathbb{Z})$ whose $(i,j)$-th entry equals the number of times the letter $i$ appears in $\zeta(j)$, for every $0\le i,j\le d-1$. Note that $S_{\zeta^n}=S_\zeta^n$.
The substitution is \emph{primitive} if its substitution matrix is primitive,
and in that case, the substitution dynamical system is uniquely ergodic. We say that the substitution is \emph{periodic} if $X_\zeta$ contains a shift-periodic point,
and otherwise it is \emph{aperiodic}. For more details on substitutions see \cite{queffelec2010substitution, fogg2002substitutions}.
Given a positive vector $\vect{v}=(v_0,\dots,v_{d-1})^t\in\R^d$, the associated \emph{substitution $\R$-action} is the suspension flow over the substitution dynamical system, with the piecewise-constant roof function $f_{\vect{v}}:X_\zeta\to\R^+$  defined by $f_{\vect{v}}(x) = v_{x_0}$.
An equivalent and fruitful way to view this system is as a one-dimensional tiling space: there are $d$ tile types labeled from $0$ to $d-1$, the lengths of which are given by the entries of $\vect{v}$, and the tilings of $\R$ are determined by elements of $X_\zeta$, see \cite{berend1993there, solomyak1997dynamics, clark2003size}.
Two cases of particular interest arise when $\vect{v}$ is chosen to be the Perron-Frobenius eigenvector of $S_\zeta^t$, where the associated $\R$-action is then called \emph{self-similar}, and when $\vect{v}=\vec{1}$ (where $\vec{1} = (1,\ldots,1)^t$), which is closely related to the substitution $\Z$-action, see \cite[Lemma 5.6]{berlinkov2019singular}.

In \cite{bufetov2020spectral}, Bufetov and Solomyak define the \emph{spectral cocycle} that corresponds to $\zeta$, and it is further developed in \cite{bufetov2020singular}. In what follows, we will generalize their construction, while largely following their path.
For every $b\in\mathcal{A}$ denote 
$ \zeta(b) = u_1^b\ldots u_{\lvert\zeta(b)\rvert}^b$ (where $\lvert w\rvert$ stands for the length of the word $w$).  
First, define a matrix-valued function $\mathscr{M}_\zeta:\mathbb{R}^d\to M_d(\mathbb{C})$: let $\vect{\xi} = (\xi_0,\dots,\xi_{d-1})^t\in\mathbb{R}^d$, then $\mathscr{M}_\zeta(\vect{\xi})$ is the complex matrix whose $(b,c)$-th entry is
\begin{equation}\label{matrix function eq}
    \sum_{1\le j\le\lvert\zeta(b)\rvert,\, u_j^b = c} \exp(-2\pi i \sum_{k=1}^{j-1} \xi_{u_k^b}).
\end{equation}
When $\vect{\xi}$ is positive (which can always be assumed, since we only care about its value modulo $\Z^d$), we can use the viewpoint described earlier to think on the substitution $\R$-action associated with $\vect{\xi}$ as a one-dimensional tiling space. In these settings, if we take a tiling starting (from $0$) with a tile of type $b$, then in the substituted tiling, $\sum_{k=1}^{j-1} \xi_{u_k^b}$ is the location of the left endpoint of the $j$-th tile, which is of type $c$.
Thus, the summands in \eqref{matrix function eq} account for all left endpoint locations of tiles of type $c$ in $\zeta(b)$.

\begin{example}\label{matrix function example}
Let $\zeta$ be the substitution defined by $\zeta(0)=012,\enspace \zeta(1)=210,\enspace \zeta(2)=1111$ and denote $e(x)=\exp(2\pi i x)$. Then for every $\vect{\xi}\in\mathbb{R}^3$,
\[ \mathscr{M}_\zeta(\vect{\xi}) = \begin{pmatrix}
1 & e(-\xi_0) & e(-(\xi_0+\xi_1)) \\
e(-(\xi_1 + \xi_2)) & e(-\xi_2) & 1 \\
0 & 1 + e(-\xi_1) + e(-2\xi_1) + e(-3\xi_1) & 0
\end{pmatrix}. 
\] 
\end{example}

Note that $\mathscr{M}_\zeta (\vect{0})$ is just $S_\zeta^t$, and that $\M$ is $\mathbb{Z}^{d}$-periodic, so it descends to a function on $\mathbb{T}^{d}$. The function $\M$ gives rise to the \emph{spectral cocycle}, which is the matrix-valued function defined by
\begin{equation} \label{spectral cocycle}
   \M(\vect{\xi}, n)\coloneqq \M(E_{S_\zeta^t}^{n-1}\vect{\xi})\cdots\M(\vect{\xi}), \quad \vect{\xi}\in\T^d,\; n=1,2,\dots
\end{equation}
where $E_{S_\zeta^t}$ is the endomorphism of $\T^d$ induced by $S_\zeta^t$,
\[ E_{S_\zeta^t} (\vect{\xi}\bmod{\mathbb{Z}^d}) = S_\zeta^t \vect{\xi} \bmod{\mathbb{Z}^d},\quad \vect{\xi}\in\R^d\]
(notice that if $\det S_\zeta = 0$, $E_{S_\zeta^t}$ does not preserve Haar measure).
A computation shows that for every $n\ge 1$,
$\M(\vect{\xi},n) = \Mn(\vect{\xi})$.

We now turn to define the \emph{essential spectral cocycle} of a vector, which is the restriction of the spectral cocycle to a subtorus corresponding to the minimal subspace of the vector, written in appropriate coordinates.
In this way we will be able to take advantage of the uniform distribution of the vector's orbit that was established in the previous section.
Let $\vect{v}\in\R^d\setminus\{\vect{0}\}$. Let $V$ be its minimal subspace with respect to $S_\zeta^t$ and denote $\dim V = r$.
Fix a lattice basis $\mathcal{B}$ of $V$, and denote by $B$ the integer matrix that corresponds to ${S_\zeta^t|}_V$ in that basis. Assume that $B$ is non-singular and that no eigenvalue of $B$ is a root of unity, so unlike $E_{S_\zeta^t}$, the endomorphism $E_B$, induced by $B$ on $\T^{r}$, 
is measure-preserving and ergodic with respect to the (normalized) Haar measure $m_r$, see \cite[Corollary 2.20]{einsiedler2013ergodic}.
As before, let $\varphi_\mathcal{B}: V\to \R^r$ be the coordinate isomorphism, $\vect{\xi} \mapsto [\vect{\xi}]_\mathcal{B}$, and define $\tM_{\zeta,\vect{v}}: \R^{r}\to M_d(\C)$ by $\tM_{\zeta, \vect{v}} = \M\circ\varphi^{-1}_\mathcal{B}$.
Since $\mathcal{B}$ is composed of integer vectors, $\tM_{\zeta, \vect{v}}$ is $\mathbb{Z}^{r}$-periodic, so it descends to a function on $\mathbb{T}^{r}$.
The \emph{essential spectral cocycle of $\vect{v}$} is defined, similarly to \eqref{spectral cocycle}, to be the following matrix-valued function:
\[ \tM_{\zeta, \vect{v}}(\vect{x},n)\coloneqq \tM_{\zeta, \vect{v}}(E_B^{n-1}\vect{x})\cdots\tM_{\zeta, \vect{v}}(\vect{x}), \quad \vect{x}\in\T^r,\; n=1,2,\dots\]
Note that $\vect{x}\mapsto \M(\varphi^{-1}_\mathcal{B} (\vect{x}), n)$ is also $\Z^r$-periodic and $\tM_{\zeta, \vect{v}}(\vect{x},n) = \M(\varphi^{-1}_\mathcal{B} (\vect{x}), n)$. In addition, observe that in fact, $\tM_{\zeta, \vect{v}}$ only depends on the minimal subspace $V$, rather than on $\vect{v}$ itself; in particular, the essential spectral cocycles of vectors with the same minimal subspace are identical.

In what follows, $\lVert \cdot \rVert$ stands for a matrix norm on $M_d(\C)$. All the following claims are independent of the choice of the norm, since all such norms are equivalent. Therefore, for the rest of the paper we will use the Frobenius norm, which is submultiplicative.
The next lemma is a simple modification of \cite[Lemma 2.3]{bufetov2020singular}.
\begin{lemma} \label{integrable lemma}
For every $n\ge 1$, the function $\vect{x}\mapsto \log\lVert\tM_{\zeta, \vect{v}}(\vect{x}, n)\rVert$ is integrable over $(\mathbb{T}^{r},m_{r})$.
\end{lemma}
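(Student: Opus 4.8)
The plan is to sandwich $\log\lVert\tM_\zeta(\vect{s},n)\rVert$ between two integrable functions, treating the upper and lower bounds separately. For the upper bound, note that $\tM_\zeta$ is continuous on the compact torus $\T^r$, so there is a constant $C\ge 1$ with $\lVert\tM_\zeta(\vect{s})\rVert\le C$ for all $\vect{s}$; since the Frobenius norm is submultiplicative, the product defining $\tM_\zeta(\vect{s},n)=\tM_\zeta(E_B^{n-1}\vect{s})\cdots\tM_\zeta(\vect{s})$ satisfies $\lVert\tM_\zeta(\vect{s},n)\rVert\le C^n$, and hence $\log\lVert\tM_\zeta(\vect{s},n)\rVert\le n\log C$ everywhere. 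Thus the positive part is bounded, and the whole problem reduces to controlling the negative part.

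For the lower bound I would use the fact that every entry of $\tM_\zeta(\vect{s},n)$ is a trigonometric polynomial in $\vect{s}$. Indeed, each entry of $\M$ is a finite sum of characters $\vect{\xi}\mapsto\exp(-2\pi i\langle\vect{\xi},\vect{m}\rangle)$ with $\vect{m}\in\Z^d$; since the coordinate isomorphism $\varphi_\mathcal{B}^{-1}$ and the toral endomorphism $E_B$ are both given by integer matrices, and a product of finitely many matrices with trigonometric-polynomial entries again has such entries, the claim follows. The crucial point is that $\tM_\zeta(\vect{s},n)$ is not the zero matrix: evaluating at $\vect{s}=\vect{0}$ gives $\tM_\zeta(\vect{0},n)=\M(\vect{0},n)=\Mn(\vect{0})=(S_\zeta^t)^n$, and because $S_\zeta^n$ is the substitution matrix of $\zeta^n$ it has no zero column, so $(S_\zeta^t)^n\neq 0$. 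Fixing an index $(b,c)$ at which this entry is non-zero and letting $p(\vect{s})$ be the corresponding entry of $\tM_\zeta(\vect{s},n)$, we obtain a trigonometric polynomial with $p(\vect{0})\neq 0$, in particular $p\not\equiv 0$.

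Since the Frobenius norm dominates the modulus of each entry, $\lVert\tM_\zeta(\vect{s},n)\rVert\ge\lvert p(\vect{s})\rvert$, so $\log\lVert\tM_\zeta(\vect{s},n)\rVert\ge\log\lvert p(\vect{s})\rvert$ pointwise; comparing negative parts, the negative part of $\log\lVert\tM_\zeta(\vect{s},n)\rVert$ is dominated pointwise by that of $\log\lvert p\rvert$. The lemma therefore reduces to the classical assertion that $\log\lvert p\rvert\in L^1(\T^r,m_r)$ for every non-zero trigonometric polynomial $p$, equivalently that the logarithmic Mahler measure $\int_{\T^r}\log\lvert p\rvert\,dm_r$ is finite (its $\log^+$ part is trivially bounded, by continuity on a compact set).

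I expect this last fact to be the main obstacle, since it amounts to showing that the logarithmic singularity of $\log\lvert p\rvert$ along the zero set of $p$ is integrable. I would establish it by induction on $r$: in one variable, writing $p(t)=e^{-2\pi i Nt}P(e^{2\pi i t})$ for a non-zero ordinary polynomial $P$, Jensen's formula evaluates $\int_0^1\log\lvert p(t)\rvert\,dt$ as the finite logarithmic Mahler measure of $P$. For the inductive step I would slice off the last coordinate, write $p$ as a trigonometric polynomial in $s_r$ whose coefficients $a_k(s')$ are trigonometric polynomials in $s'=(s_1,\dots,s_{r-1})$, apply the one-variable estimate on each fibre, and integrate in $s'$ by Fubini; the delicate part is that the fibrewise Mahler measure blows up precisely on the null set where all $a_k(s')$ vanish, so one must verify that $s'\mapsto\int_\T\log\lvert p(s',s_r)\rvert\,ds_r$ is itself integrable rather than merely finite almost everywhere. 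Alternatively, one can bypass the induction by a sublevel-set (Remez/Łojasiewicz-type) bound $m_r(\{\lvert p\rvert<\varepsilon\})\le C\varepsilon^\alpha$ and integrate the distribution function, which immediately yields $\int_{\T^r}\log^-\lvert p\rvert\,dm_r<\infty$.
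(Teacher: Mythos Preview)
Your argument is correct, but the paper takes a shorter path. Rather than isolating a single nonzero entry and bounding $\log\lVert\tM_\zeta(\vect{s},n)\rVert$ from below by $\log\lvert p(\vect{s})\rvert$, the paper observes that the \emph{square} of the Frobenius norm,
\[
\lVert\tM_\zeta(\vect{s},n)\rVert^2=\sum_{b,c}\bigl(\Mn(\varphi_\mathcal{B}^{-1}(\vect{s}))\bigr)_{bc}\,\overline{\bigl(\Mn(\varphi_\mathcal{B}^{-1}(\vect{s}))\bigr)_{bc}},
\]
is itself a multivariate trigonometric polynomial with integer coefficients, nontrivial since its value at $\vect{s}=\vect{0}$ is $\lVert S_\zeta^n\rVert^2>0$. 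The integral $\int_{\T^r}\log\lVert\tM_\zeta(\vect{s},n)\rVert^2\,dm_r$ is then exactly the logarithmic Mahler measure of this polynomial, and the paper simply invokes the classical fact (see Boyd) that the logarithmic Mahler measure of a nonzero integer polynomial is not only finite but $\ge 0$. This disposes of both the upper and lower bounds at once, with no induction or Remez-type estimate required.

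Your route has the merit of being more self-contained: you do not need the integer-coefficient hypothesis, and the reduction to $\log\lvert p\rvert\in L^1(\T^r)$ for a nonzero trigonometric polynomial is a clean, reusable statement. The cost is that you must then actually supply that integrability, and as you note the fibrewise induction has a genuine subtlety (the fibre integral is controlled by $\log$ of the largest nonvanishing coefficient $a_k(s')$, which is itself only in $L^1$ by the inductive hypothesis). The paper sidesteps all of this by recognising the expression as a Mahler measure of an integer polynomial and citing the literature.
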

\begin{proof}
Notice that $\lVert\tM_{\zeta, \vect{v}}(\vect{x}, n)\rVert \le \lVert S_\zeta^n \rVert$.
Writing
\[ \lVert \tM_{\zeta, \vect{v}}(\vect{x}, n) \rVert^2 = \sum_{b,c} (\Mn(\varphi_\mathcal{B}^{-1}(\vect{x})))_{bc} \overline{(\Mn(\varphi_\mathcal{B}^{-1}(\vect{x})))_{bc}}\]
and observing that $\lVert \tM_{\zeta, \vect{v}}(\vect{0}, n) \rVert ^2 = \lVert S_{\zeta}^n \rVert ^2$, we see that $\lVert \tM_{\zeta, \vect{v}}(\vect{x}, n) \rVert ^2 $ is a nontrivial multivariate trigonometric polynomial with integer coefficients.
The integral $\int_{\T^r}\log\lVert\tM_{\zeta, \vect{v}}(\vect{x}, n)\rVert^2\, d m_r(\vect{x})$
is just the logarithmic Mahler measure of this polynomial, which is known to be at least $0$, see e.g.\ \cite{boyd_mahler}.
\end{proof}

By Furstenberg-Kesten theorem \cite{furstenberg1960products} (see also \cite{viana2014lectures}), the Lyapunov exponent exists, namely, the following limit exists and is constant for $m_r$-a.e.\ $\vect{x}\in\T^r$:
\[ \chi_{\zeta,\vect{v}} \coloneqq \lim_{n\to\infty}\frac{1}{n} \log\lVert\tM_{\zeta, \vect{v}}(\vect{x},n)\rVert,\]
and we call it the \emph{essential Lyapunov exponent of $\vect{v}$}.
It is independent of the choice of the norm and the basis $\mathcal{B}$. In addition, by Kingman's theorem (see for example, \cite[Theorem 3.3]{viana2014lectures}), the following identity holds:
\begin{equation} \label{inf identity}
    \chi_{\zeta,\vect{v}} = \inf_{k\ge 1} \frac{1}{k}\int_{\T^r} \log\lVert \tM_{\zeta, \vect{v}}(\vect{x}, k)\rVert \,dm_r(\vect{x}).
\end{equation}

\begin{remark}
In the case that the minimal subspace of $\vect{v}$ is $\R^d$, the spectral cocycle and the essential spectral cocycle of $\vect{v}$ coincide. In \cite{bufetov2020singular}, it is assumed that the characteristic polynomial of $S_\zeta$ is irreducible over $\Q$, so the minimal subspace of any non-zero vector is $\R^d$.
\end{remark}

\begin{proposition} \label{exponent inequality}
Let $\zeta$ be a substitution on $\mathcal{A} = \{0,\dots,d-1\}$ with $d\ge 2$. Let $\vect{v}\in\mathbb{R}^d\setminus \{\vect{0}\}$ and let $V$ be its minimal subspace. Denote by $S_\zeta$ the substitution matrix,
and suppose that ${S_\zeta^t|}_V$ is non-singular and has no eigenvalue that is a root of unity.
If $({(S_\zeta^t)}^{kn} \vect{w})_{n=0}^\infty$ is u.d.\ mod $1$ in $V$ for some $\vect{w}\in V$ and every $k\ge 1$, then
\begin{equation}\label{exponent bound eq}
\chi_\zeta^+ (\vect{w}) \coloneqq \limsup_{n\to\infty}\frac{1}{n} \log \lVert \M(\vect{w}, n)\rVert \le \chi_{\zeta,\vect{v}}.
\end{equation}
\end{proposition}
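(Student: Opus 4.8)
The plan is to fix $k\ge 1$, cut the cocycle $\M(\vect{w},n)$ into blocks of length $k$, use submultiplicativity to turn $\log\|\M(\vect{w},qk)\|$ into a Birkhoff-type sum of $\log\|\tM_\zeta(\cdot,k)\|$ along the orbit $((S_\zeta^t)^{kn}\vect{w})$, and then invoke the equidistribution hypothesis to replace that average by an integral against $m_r$; taking the infimum over $k$ and applying the Kingman identity \eqref{inf identity} will finish the proof. Concretely, I would write $\vect{s}_0 = \varphi_\mathcal{B}(\vect{w})\in\R^r$ and let $B$ be the integer matrix of $\restr{S_\zeta^t}{V}$ in the basis $\mathcal{B}$. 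Since $\varphi_\mathcal{B}$ intertwines $\restr{S_\zeta^t}{V}$ with $B$ and $V$ is $S_\zeta^t$-invariant, we get $\varphi_\mathcal{B}((S_\zeta^t)^{j}\vect{w}) = B^{j}\vect{s}_0$, and hence $\M((S_\zeta^t)^{j}\vect{w},k) = \tM_\zeta(B^{j}\vect{s}_0,k)$ by the $\Z^r$-periodicity of $\tM_\zeta(\cdot,k)$. From the cocycle relation $\M(\vect{w},m+n) = \M(E_{S_\zeta^t}^{n}\vect{w},m)\,\M(\vect{w},n)$ and submultiplicativity of the Frobenius norm, grouping $qk$ factors into $q$ blocks of length $k$ gives
\[ \log\|\M(\vect{w},qk)\| \le \sum_{j=0}^{q-1} g_k(B^{jk}\vect{s}_0), \qquad g_k(\vect{s}) := \log\|\tM_\zeta(\vect{s},k)\|. \]

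Next I would pass to the limit $q\to\infty$. By Definition \ref{ud in minimal subspace}, the hypothesis says that $(B^{kn}\vect{s}_0)_n$ is u.d.\ mod $1$ in $\R^r$, so it equidistributes in $\T^r$ against $m_r$. The difficulty is that $g_k$ is not continuous and not bounded below: it equals $-\infty$ exactly where $\tM_\zeta(\cdot,k)$ vanishes, so equidistribution of continuous functions cannot be applied to $g_k$ directly. Since I only need an upper bound, I would truncate from below: for $L>0$ set $g_k^{(L)} = \max(g_k,-L)$, which is continuous and bounded, with $-L \le g_k^{(L)} \le \log\|S_\zeta^k\|$ (the upper bound coming from $\|\tM_\zeta(\vect{s},k)\| \le \|S_\zeta^k\|$ as in Lemma \ref{integrable lemma}), and $g_k \le g_k^{(L)}$. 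Equidistribution applied to the continuous function $g_k^{(L)}$, together with $g_k \le g_k^{(L)}$, yields
\[ \limsup_{q\to\infty}\frac{1}{q}\sum_{j=0}^{q-1} g_k(B^{jk}\vect{s}_0) \le \int_{\T^r} g_k^{(L)}\, dm_r. \]
Letting $L\to\infty$, we have $g_k^{(L)}\downarrow g_k$ pointwise, and since $g_k$ is integrable by Lemma \ref{integrable lemma} and $g_k^{(L)} \le \log\|S_\zeta^k\|$, monotone (equivalently dominated) convergence gives $\int_{\T^r} g_k^{(L)}\,dm_r \to \int_{\T^r} g_k\,dm_r$. Dividing the block inequality by $qk$ and combining, I obtain $\limsup_{q\to\infty}\frac{1}{qk}\log\|\M(\vect{w},qk)\| \le \frac{1}{k}\int_{\T^r} g_k\,dm_r$.

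To conclude, I would remove the restriction to multiples of $k$ and take the infimum. For general $n = qk+s$ with $0\le s<k$, the estimate $\|\M(\vect{w},n)\| \le \|\M(E_{S_\zeta^t}^{qk}\vect{w},s)\|\cdot\|\M(\vect{w},qk)\|$ with $\|\M(\cdot,s)\| \le \|S_\zeta^s\| \le \max_{0\le s<k}\|S_\zeta^s\|$ shows the leftover block contributes only an $O(1)$ additive term to $\log\|\M(\vect{w},n)\|$ and vanishes after division by $n$; hence $\chi_\zeta^+(\vect{w}) = \limsup_n \frac1n\log\|\M(\vect{w},n)\|$ is bounded by the same quantity $\frac{1}{k}\int_{\T^r}\log\|\tM_\zeta(\vect{s},k)\|\,dm_r$. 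Since the hypothesis is assumed for every $k\ge 1$, taking the infimum over $k$ and applying \eqref{inf identity} gives $\chi_\zeta^+(\vect{w}) \le \chi(\M,\vect{v})$.

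The main obstacle is the passage from the orbit average to the integral in the middle step: the integrand $g_k = \log\|\tM_\zeta(\cdot,k)\|$ is not continuous and not bounded below, so it cannot be fed into the equidistribution statement as is. The truncation $\max(g_k,-L)$ followed by monotone convergence — which is precisely where the integrability supplied by Lemma \ref{integrable lemma} is used — bridges this gap, and it is fortunate that this device produces only the inequality $\limsup \le \int$, which is exactly what an upper bound on the pointwise exponent requires.
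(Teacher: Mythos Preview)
Your proof is correct and follows essentially the same route as the paper's: block the cocycle in groups of $k$, use submultiplicativity to get a Birkhoff sum, regularize $\log\|\tM_\zeta(\cdot,k)\|$ to make it continuous, apply equidistribution, then pass to the limit in the regularization using Lemma~\ref{integrable lemma} and conclude via \eqref{inf identity}. The only cosmetic difference is that the paper regularizes with $\log(\varepsilon+\|\cdot\|)$ and takes $\varepsilon\to 0$, whereas you truncate with $\max(g_k,-L)$ and take $L\to\infty$; you are also slightly more explicit about why the $\limsup$ over all $n$ equals the $\limsup$ along multiples of $k$.
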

\begin{proof}
We closely follow the proof of Theorem 2.4 in \cite{bufetov2020singular}. For every $k\ge 1$,
\begin{align*}
    \chi_\zeta^+ (\vect{w}) & = \limsup_{n\to\infty}\frac{1}{nk} \log \lVert \Mk(\vect{w}, n)\rVert
    \le \limsup_{n\to\infty}\frac{1}{nk} \sum_{j=0}^{n-1} \log \lVert \Mk(E_{S_\zeta^t}^{kj}(\vect{w})) \rVert
    \\ & \le \lim_{\varepsilon\to 0} \limsup_{n\to\infty}\frac{1}{nk} \sum_{j=0}^{n-1} \log (\varepsilon + \lVert \M(E_{S_\zeta^t}^{kj}(\vect{w}), k) \rVert)
    = \lim_{\varepsilon\to 0} \frac{1}{k} \int_{\mathbb{T}^{r}} \log (\varepsilon + \lVert \tM_{\zeta, \vect{v}}(\vect{x},k)\rVert) \,dm_r(\vect{x}),
\end{align*}
where $r = \dim V$ and in the last equality we used the uniform distribution mod $1$ of $({(S_\zeta^t)}^{kn} \vect{w})$ in $V$.
Split the last integral into two parts, over $\{\lVert \tM_{\zeta, \vect{v}}(\vect{x},k)\rVert \ge \frac{1}{2}\}$ and $\{\lVert \tM_{\zeta, \vect{v}}(\vect{x},k)\rVert\in[0,\frac{1}{2})\}$. In the first domain the functions are uniformly bounded, and in the second we have
\[ \lvert \log (\varepsilon + \lVert \tM_{\zeta, \vect{v}}(\vect{x}, k)\rVert) \rvert \le \lvert \log \lVert \tM_{\zeta, \vect{v}}(\vect{x}, k)\rVert \rvert, \]
so by Lemma \ref{integrable lemma}, we can apply the dominated convergence theorem to obtain
\[ \chi_\zeta^+ (\vect{w}) \le \frac{1}{k} \int_{\mathbb{T}^r} \log\lVert \tM_{\zeta, \vect{v}}(\vect{x}, k)\rVert \,dm_r(\vect{x}).\]
The proof is now completed thanks to \eqref{inf identity}.
\end{proof}

Now we state our second main result.
\begin{theorem} \label{main theorem}
Let $\zeta$ be a primitive aperiodic substitution on $\mathcal{A} = \{0,\dots,d-1\}$ with $d\ge 2$. Denote by $S_\zeta$ the substitution matrix and by $\theta_1$ the Perron-Frobenius eigenvalue.
\begin{enumerate}[font=\normalfont, label=(\alph*)]
    \item Let $\vect{v}\in \ker p_{\theta_1}(S_\zeta^t)$, $\vect{v}>\vect{0}$, where $p_{\theta_1}$ is the minimal polynomial of $\theta_1$ over $\Q$.
    If \[ \chi_{\zeta,\vect{v}} < \frac{\log\theta_1}{2},\]
    then the substitution $\R$-action associated with $\vect{v}$ has purely singular spectrum. In particular, this is true for the self-similar $\R$-action associated with the Perron-Frobenius eigenvector of $S_\zeta^t$, and if the characteristic polynomial of $S_\zeta$ is irreducible, we can take any positive vector $\vect{v}\in\R^d$.
    \item Let $V = \rspan\{\vec{1},\dots,{(S_\zeta^t)}^{d-1}\vec{1}\}$, and suppose that ${S_\zeta^t|}_V$ is non-singular, non-degenerate and has no eigenvalue that is a root of unity.
    If
    \[
        \chi_{\zeta, \vec{1}} < \frac{\log\theta_1}{2},
    \]
    then the substitution $\mathbb{Z}$-action has purely singular spectrum.
\end{enumerate}
\end{theorem}
\begin{proof}
(a) It follows from Corollary \ref{ud corollary}\ref{ud corollary b} that $({(S_\zeta^t)}^{kn}(S_\zeta^t)^{\ell} \omega \vect{v})_{n=0}^\infty$ is u.d.\ mod $1$ in $U\coloneqq\ker p_{\theta_1}(S_\zeta^t)$ for every $k\ge 1$ and $\ell\in\N$ for a.e.\ $\omega\in\R$. It was observed in Example \ref{minimal subspace example}(c) that the minimal polynomial of ${S_\zeta^t|}_U$ is $p_{\theta_1}$, so ${S_\zeta^t|}_U$ is non-singular and has no eigenvalue that is a root of unity (otherwise $p_{\theta_1}$ would have been cyclotomic, but $\theta_1 > 1$). Thus, by Proposition \ref{exponent inequality},
\begin{equation}\label{exp p ineq 2}
    \chi_\zeta^+(E_{S_\zeta^t}^\ell\omega\vect{v}) < \frac{\log\theta_1}{2}
\end{equation}
for a.e.\ $\omega$.
Now the claim follows from \cite[Corollary 4.5(iii)]{bufetov2020spectral}
(notice that the additional assumption made there is needed only to prove \eqref{exp p ineq 2}, and see also Section $4.2$ in that paper), but let us briefly sketch the proof of this corollary for the sake of clarity.
Denote by $(\mathfrak{X}^{\vect{v}}_{\zeta},h_t,\widetilde{\mu})$ the suspension flow associated with $\vect{v}$.
A collection of $L^2(\mathfrak{X}^{\vect{v}}_{\zeta}, \widetilde{\mu})$ functions, which are called \emph{Lip-cylindrical functions}, are studied in \cite{bufetov2020spectral}. Each such function corresponds to some level $\ell\in\N$, and the collection is dense in $L^2(\mathfrak{X}^{\vect{v}}_{\zeta}, \widetilde{\mu})$. Thus, it is sufficient to show that if $f$ is a Lip-cylindrical function $f$ of level $\ell$, then the corresponding spectral measure $\sigma_f$ is singular.
The authors show that for such $f$,
\begin{equation}\label{eq local dim}
  \underline{d}(\sigma_f, \omega) \ge 2 - \frac{2\max\{0, \chi_\zeta^+(E_{S_\zeta^t}^\ell\omega\vect{v})\}}{\log\theta_1},  
\end{equation}
where $\underline{d}(\sigma_f, \omega)\coloneqq \liminf_{r\to 0}{\log\sigma_f(B_r(\omega))}/{\log r}$ is the lower local dimension of $\sigma_f$.
By \eqref{exp p ineq 2} and \eqref{eq local dim}, $\underline{d}(\sigma_f, \omega) > 1$ for Lebesgue-a.e.\ $\omega$, which implies the singularity of $\sigma_f$.

(b) By Corollary \ref{ud corollary}\ref{ud corollary a}, $({(S_\zeta^t)}^{kn} (S_\zeta^t)^{\ell}\omega \vec{1})_{n=0}^\infty$ is u.d.\ mod $1$ in $V$ for every $k\ge 1$ and $\ell\in\N$ for a.e.\ $\omega\in\R$, and it follows from Proposition \ref{exponent inequality} that
\begin{equation}\label{exp p ineq 1}
   \chi_\zeta^+(E_{S_\zeta^t}^\ell\omega\vec{1}) < \frac{\log\theta_1}{2} 
\end{equation}
for a.e.\ $\omega$.
We can now proceed as in part (a), and use the fact that the singularity of the $\R$-action associated with $\vec{1}$ implies the singularity of the $\Z$-action. Alternatively, the singularity of the $\Z$-action can be proven directly, using dimension estimates similar to \eqref{eq local dim} which hold in the $\Z$-action case; see \cite[Lemma 3.1]{bufetov2020singular} for the details
(again, the stronger assumptions made in \cite{bufetov2020singular} are used in the proof of that lemma only to obtain \eqref{exp p ineq 1}, so the lemma still holds in our case).
\end{proof}

\begin{remark} \label{remark Baake}
(a) In fact, Theorem \ref{main theorem} can be extended to $\R$-actions associated with a larger collection of vectors, but we omit the details here.

(b) Notice that given a primitive aperiodic substitution $\zeta$, we can always choose some $k\ge 1$ such that $S_\zeta^k$ is non-degenerate, and replace $\zeta$ by $\zeta^k$ without changing the substitution space. It is also not hard to remove the assumption that ${S_\zeta^t|}_V$ is non-singular: by the primary decomposition theorem, we can decompose $V$ further into a direct sum of invariant subspaces $V = V_0 \oplus V_1$ where $V_0$ is the generalized eigenspace that corresponds to the eigenvalue $0$. Let $\vect{v}$ be the projection of $\vec{1}$ onto $V_1$, then for every sufficiently large $n$, $(S_\zeta^t)^n \vec{1} = (S_\zeta^t)^n \vect{v}$, and we can look at the cocycle defined on the minimal subspace of $\vect{v}$ instead of $V$, where the restriction of $S_\zeta^t$ is guaranteed to be non-singular.

(c) In the case of the Perron-Frobenius eigenvector $\vect{u}$ of $S_\zeta^t$,
some related results were obtained by Baake et al.\ in terms of the \emph{Fourier matrix cocycle}, which is closely related to the spectral cocycle. In \cite[Fact 5.6]{baake2018spectral}, Baake, Grimm and Ma{\~n}ibo showed (using different notations) that for the Fibonacci substitution $\zeta$,\; $\chi_\zeta^+(\omega\vect{u})$ exists as a limit for a.e.\ $\omega\in\R$. Using the theory of Bohr-almost periodic functions, Baake, Frank, Grimm and Robinson gave in \cite[Lemma 6.16]{baake_2019} a bound, which is relatively similar to \eqref{exponent bound eq}, for some binary non-Pisot substitution.
Baake, G{\"a}hler and Ma{\~n}ibo extended this bound to the general case in \cite{baake2019renormalisation}, under the additional assumption that the function $\omega\mapsto \log \lVert \M(\omega\vect{u}, n)\rVert$ is Bohr-almost periodic (the authors mentioned that this assumption can be relaxed by using an extension of a theorem of Sobol, which can be found in \cite{baake_averaging}), and gave sufficient conditions for the diffraction spectrum to be singular.
\end{remark}

\section{Examples} \label{examples section}
In what follows, we consider a few examples of families of reducible non-Pisot substitutions (i.e., the characteristic polynomial of the substitution matrix is reducible over $\Q$, and the Perron-Frobenius eigenvalue is not a Pisot number), and prove they have purely singular spectrum.
We will use some of the techniques developed in \cite[Section 5.1 and Appendix]{baake2018spectral}, \cite[Corollary 9]{manibo2017lyapunov} and also used in \cite[Section 5]{bufetov2020singular}.
To ease notation, we write $z_j = e(-\xi_j) = \exp(-2\pi i \xi_j)$ for $j=0,1,2$.

\begin{example}
For every $m\ge 3$ define the substitution $\zeta_m$ by $0 \mapsto 0^m 12, \enspace 1\mapsto 1^{2m}02, \enspace 2\mapsto 0122$.
Its corresponding substitution matrix is
\[ S_{\zeta_m} = \begin{pmatrix}
    m & 1 & 1 \\
    1 & 2m & 1 \\
    1 & 1 & 2 \\
  \end{pmatrix}, \]
and a straightforward calculation shows that its eigenvalues $\theta_1, \theta_2, \theta_3$ satisfy $2m < \theta_1 < 2m + 1$, $\theta_2 = m$ and $1 < \theta_3 < 2$, so $\theta_1 \notin \Q$ and thus $\zeta_m$ is aperiodic by \cite[Theorem 4.6]{baake2013aperiodic}.
The corresponding matrix-valued function is
\[ \Mm(\vect{\xi}) = \begin{pmatrix}
    1+\dots+z_0^{m-1} & z_0^m & z_0^m z_1 \\
    z_1^{2m} & 1+\dots+z_1^{2m-1} & z_0 z_1^{2m} \\
    1 & z_0 & z_0 z_1(1 + z_2) \\
  \end{pmatrix},\]
and since $\vec{1},\; S_{\zeta_m}^t \vec{1},\; {(S_{\zeta_m}^t)}^2\vec{1}$ are linearly independent, the function
$\tM_{\zeta_m, \vec{1}}$ is just $\Mm$.
Using the Frobenius norm we have
\[ \lVert \Mm (\vect{\xi}) \rVert ^2 = \Big\lvert\frac{z_0^m - 1 }{z_0 - 1}\Big\rvert^2 + \Big\lvert\frac{z_1^{2m} - 1}{z_1 - 1}\Big\rvert^2 + \lvert z_2 + 1 \rvert^2 + 6,\]
whence
\begin{align*}
    \int_{\T^3}\log \lVert \Mm (\vect{\xi}) \rVert ^2 \,dm_3(\vect{\xi})
    =& \int_{\T^3}\log(\lvert z_0^m - 1 \rvert^2 \lvert z_1 - 1 \rvert^2 + \lvert z_1^{2m} - 1 \rvert^2 \lvert z_0 - 1 \rvert^2 \\
    & \hphantom{\int_{\T^3}\log(} + \lvert z_0 - 1 \rvert^2\lvert z_1 - 1 \rvert^2\lvert z_2 + 1 \rvert^2 + 6\lvert z_0 - 1 \rvert^2 \lvert z_1 - 1 \rvert^2)\,dm_3(\vect{\xi})\\
    - &\int_{\T^3} \log(\lvert z_0 - 1 \rvert^2 \lvert z_1 - 1 \rvert^2) \,dm_3(\vect{\xi}).
\end{align*}
Denote the two integrals on the right-hand side by $A$ and $B$ respectively. Applying Jensen's inequality and then Parseval's identity, we see that $A \le \log 40$.
Next, by Jensen's formula,
\mbox{$B = 2\int_{\T} \log(\lvert e(-t)-1 \rvert^2) \,dt = 0$.}
Therefore, using \eqref{inf identity} with $k=1$, we see that for every $m \ge 20 $,
\begin{equation*}
    \chi_{\zeta_m, \vec{1}} \le \frac{1}{2} \int_{\T^3}\log \lVert \Mm (\vect{\xi}) \rVert ^2 \,dm_3(\vect{\xi})
\le \frac{1}{2}\log40 \le \frac{1}{2}\log(2m) < \frac{1}{2}\log \theta_1,
\end{equation*}
and it follows from Theorem \ref{main theorem} that the substitution $\Z$-action has purely singular spectrum. 
\end{example}

\begin{example}
Define another family of substitutions $\sigma_m, \; m\ge 1$, by $0 \mapsto (01)^m 2, \enspace 1\mapsto 2(10)^m, \enspace 2\mapsto 1^{2m+2}$.
The eigenvalues of $S_{\sigma_m}$ satisfy $2m+1 < \theta_1 < 2m+2$, $-2 < \theta_2 < -1$ and $\theta_3 = 0$, and again this substitution is aperiodic. Denote $q(z_0, z_1) = 1+(z_0 z_1)+\dots+(z_0 z_1)^{m-1} = \frac{(z_0 z_1)^m - 1}{z_0 z_1 - 1}$, then we have
\[ \Msm(\vect{\xi}) = \begin{pmatrix}
    q(z_0, z_1) & z_0 q(z_0, z_1) & (z_0 z_1)^m\\
    z_1 z_2 q(z_0, z_1) & z_2 q(z_0, z_1) & 1 \\
    0 & 1+z_1+\dots+z_1^{2m+1} & 0 \\
  \end{pmatrix}.\]
The minimal subspace of both the Perron-Frobenius eigenvector $\vect{u}$ and $\vec{1}$ (with respect to $S_{\sigma_m}^t$) is $V = \Span\{(1,1,0)^t, \, (0,0,1)^t\}$ (notice that with $m=1$ and $\vect{u}$, this is Example \ref{minimal subspace example}(b) and also Example \ref{matrix function example}), and hence $\tM_{\sigma_m, \vect{u}} = \tM_{\sigma_m, \vec{1}} = \tM_{\sigma_m, \vect{w}}$,
where $\vect{w}$ is any positive vector in $V$ (the last equality follows from Example \ref{minimal subspace example}(c)).
When restricted to $V$, $\lVert \Msm (\vect{\xi}) \rVert ^2$ is simplified into
\[ \lVert \tM_{\sigma_m, \vect{w}} (x_0, x_1) \rVert ^2 = \lVert \Msm (x_0, x_0, x_1) \rVert ^2 = 4\Big\lvert\frac{z_0^{2m} - 1 }{z_0^2 - 1}\Big\rvert^2 + \Big\lvert\frac{z_0^{2m+2} - 1 }{z_0 - 1}\Big\rvert^2 + 2.\]
(where this time $z_0 = e(-x_0)$).
Consequently,
\begin{align*}
    & \int_{\T^2}\log\lVert \tM_{\sigma_m, \vect{w}} (\vect{x}) \rVert ^2 \,dm_2(\vect{x}) \\
    = &\int_{\T}\log(4\lvert z_0^{2m} - 1\rvert^2 + \lvert z_0^{2m+2} - 1 \rvert^2 \lvert z_0 + 1\rvert^2 + 2\lvert z_0^2-1 \rvert^2) \,dx_0
    - \int_{\T} \log(\lvert z_0^2-1 \rvert^2) \,dx_0.
\end{align*}
Proceeding as in the previous example, for every $m\ge 8$ we have
\[ \chi_{\sigma_m,\vect{w}}\le \frac{1}{2} \log(16) < \frac{1}{2} \log(\theta_1),\]
so by Theorem \ref{main theorem}, the $\Z$-action and any $\R$-action associated with a positive vector in $V$ have purely singular spectrum. Moreover, \cite[Corollary 4.5]{solomyak1997dynamics} immediately implies that the self-similar action associated with $\vect{u}$ is singular continuous.
\end{example}

\begin{example}
Define $\zeta\coloneqq\zeta_{m,A,B}$ by $0 \mapsto A2, \enspace 1\mapsto 2B, \enspace 2\mapsto 022$, where $A, B\in \{0,1\}^m$.
Suppose that $A\neq 0^m$ and that in each of the words $A, B$, its less frequent letter appears at most $k$ times, where $8k^2 + 8k + 14 \le m$. The eigenvalues of $S_\zeta$ satisfy $m < \theta_1 < m+1$, $\theta_2 = \ell_0(A) - \ell_0(B)$ and $1<\theta_3<2$, where $\ell_0(A)$ and $\ell_0(B)$ are the number of $0$'s in $A$ and $B$ respectively.
The minimal subspace of both $\vec{1}$ and the Perron-Frobenius eigenvector $\vect{u}$ is again $\Span\{(1,1,0)^t, \, (0,0,1)^t\}$. Using the notation $z_j = e(-x_j)$ we get
\begin{align*}
    & \int_{\T^2}\log\lVert \tM_{\zeta, \vec{1}} (\vect{x}) \rVert ^2 \,dm_2(\vect{x})
    = \int_{\T^2}\log(3+ \lvert 1+z_1\rvert^2 + \sum_{b,c=0,1}\lvert (\M(x_0,x_0,x_1))_{bc} \rvert^2) \,dm_2(\vect{x})\\
    \le &\int_{\T^2} \log(2(\lvert 1 + \dots + z_0^{m-1} \rvert + k)^2 + 2k^2 + 3 + \lvert 1+z_1\rvert^2) \,dm_2(\vect{x})
    \\ =& \int_{\T^2} \log(2\lvert z_0^m - 1\rvert^2 + 4k\lvert z_0^m - 1\rvert \lvert z_0 - 1 \rvert +(4k^2 + 3)\lvert z_0-1\rvert^2 + \lvert 1+z_1\rvert^2\lvert z_0-1\rvert^2) \,dm_2(\vect{x}),
\end{align*}
and it follows from Jensen inequality, Parseval's identity and Cauchy–Schwarz inequality that
\[ \chi_{\zeta, \vect{u}} = \chi_{\zeta, \vec{1}} \le \frac{1}{2} \log(8k^2+8k+14) < \frac{1}{2} \log(\theta_1),\]
and both associated actions, as well as any other $\R$-action associated with a positive vector in this subspace, are purely singular.
\end{example}
\bigskip
\textbf{Acknowledgements:} The author is grateful to Boris Solomyak for many helpful ideas, suggestions and comments.
This research is a part of the author's master's thesis (in preparation) at the Bar-Ilan University under the direction of B. Solomyak and was supported in part by the Israel Science Foundation grant 911/19 (PI B. Solomyak).
\bigskip
\bibliographystyle{plain}
\bibliography{references}
\end{document}